 \newtheorem{Theorem}{Theorem}[section]
 \newtheorem{Corollary}[Theorem]{Corollary}
 \newtheorem{Proposition}[Theorem]{Proposition}
 \newtheorem{Definition}[Theorem]{Definition}
 \newtheorem{Conjecture}[Theorem]{Conjecture}
 \def\qed{\ifhmode\textqed\fi
       \ifmmode\ifinner\quad\qedsymbol\else\dispqed\fi\fi}
 \def\textqed{\unskip\nobreak\penalty50
        \hskip2em\hbox{}\nobreak\hfill\qedsymbol
        \parfillskip=0pt \finalhyphendemerits=0}
 \def\dispqed{\rlap{\qquad\qedsymbol}}
\def\ZZ{\mathbb{Z}}
\def\supp{\textup{supp}}
\def\Ind{\textup{Ind}}
\def\Del{\textup{Del}}
\def\Lk{\textup{Lk}}
\def\sort{\textup{sort}}
\def\ini{\textup{in}}
\def\p{\mathfrak{p}}
\begin{document}
 
\title{Sortable simplicial complexes and their associated toric rings}

\author{Antonino Ficarra, Somayeh Moradi}

\address{Antonino Ficarra, Departamento de Matem\'{a}tica, Escola de Ci\^{e}ncias e Tecnologia, Centro de Investiga\c{c}\~{a}o, Matem\'{a}tica e Aplica\c{c}\~{o}es, Instituto de Investiga\c{c}\~{a}o e Forma\c{c}\~{a}o Avan\c{c}ada, Universidade de \'{E}vora, Rua Rom\~{a}o Ramalho, 59, P--7000--671 \'{E}vora, Portugal}
\email{antonino.ficarra@uevora.pt}\email{antficarra@unime.it}

\address{Somayeh Moradi, Department of Mathematics, Faculty of Science, Ilam University, P.O.Box 69315-516, Ilam, Iran}
\email{so.moradi@ilam.ac.ir}

\subjclass[2020]{Primary 13F55, 13A30, 13C14; Secondary 05E40}
\keywords{sortability, toric rings, divisor class group, Cohen-Macaulay.}

\begin{abstract}
Let $\Gamma$ be a $d$-flag sortable simplicial complex. We consider the toric ring $R_{\Gamma}=K[{\bf x}_Ft:F\in \Gamma]$ and the Rees algebra of the facet ideals $I(\Gamma^{[i]})$ of pure skeletons of $\Gamma$. We show that these algebras are Koszul, normal Cohen-Macaulay domains. Moreover, we study the Gorenstein property, the canonical module, and the $a$-invariant of the normal domain $R_{\Gamma}$ by investigating its divisor class group. Finally, it is shown that any $d$-flag sortable simplicial complex is vertex decomposable, which provides a  characterization of the Cohen-Macaulay property of such complexes.
\end{abstract}

\maketitle
\section*{Introduction}

The sortability concept was first defined by Sturmfels~\cite{St} for a finite set of monomials generated in one degree in a polynomial ring $S=K[x_1,\ldots,x_n]$ over a field $K$. This concept was extended by Herzog, Hibi and the second author of this paper~\cite{HHM} to a finite set of monomials of arbitrary degrees in $S$. Toric rings generated by sortable sets of monomials  have the desirable property that their defining ideal possesses a quadratic reduced Gröbner basis consisting of binomials that arise from sorting relations, as shown in~\cite[Theorem 1.4]{HHM} and ~\cite[Theorem 6.16]{EH}. 
In~\cite{HKMR}, the notion of sortability was applied to simplicial complexes. To each face $F$ of a simplicial complex $\Gamma$, one can associate a monomial ${\bf x}_F=\prod_{i\in F}x_i$. A simplicial complex $\Gamma$ is called {\em sortable} if the set of monomials ${\bf x}_F$ corresponding to the faces $F\in \Gamma$ forms a sortable set of monomials. In the mentioned paper, the Rees algebra of the $t$-independence ideal $I_t(G)$ of a proper interval graph $G$ was shown to be Koszul and a normal Cohen-Macaulay ring. The ideal $I_t(G)$ is the facet ideal of the $t$-pure skeleton the independence complex $\Delta_G$ of $G$, which is a sortable simplicial complex in this case~\cite[Theorem 8]{HKMR}. 

In this paper, we investigate more generally, how the  sortability property for a simplicial complex $\Gamma$ implies desirable algebraic properties for toric and Rees algebras attached to $\Gamma$ and for the Stanley-Reisner ring of $\Gamma$. 
We consider $d$-flag sortable simplicial complexes. A {\em $d$-flag} simplicial complex is a simplicial complex whose all minimal non-faces have cardinality $d$. This family of simplicial complexes turns out to exhibit interesting algebraic properties. The key reason for this, is that any $d$-flag sortable simplicial complex is the independence complex $\Ind(\Delta)$ of a so-called unit-interval simplicial complex $\Delta$. Roughly speaking, a unit-interval simplicial complex is a pure simplicial complex whose facets are obtained by taking the sets of facets of pure skeletons of some simplices that admit a suitable labeling. This fact is formally established in Theorem~\ref{Thm:Ind}.
For a $d$-flag sortable simplicial complex $\Gamma$ or equivalently for $\Gamma=\Ind(\Delta)$, where $\Delta$ is a unit-interval simplicial complex of dimension $d-1$, we mainly study

\begin{enumerate}
    \item[(a)] The Rees algebra $\mathcal{R}(I(\Gamma)^{[t]})$ of the facet ideal of pure skeletons $\Gamma^{[t]}$ of $\Gamma$. 
    \item[(b)] The divisor class group $\textup{Cl}(R_\Gamma)$, the Gorenstein property, the canonical module $\omega_{R_{\Gamma}}$ and the $a$-invariant  of the toric ring $R_{\Gamma}=K[{\bf x}_Ft:F\in \Gamma]$.
    \item[(c)] The Cohen-Macaulayness of $\Gamma$.
\end{enumerate}

We address (a) in Section \ref{sec:1}. To this aim, Theorem~\ref{Thm:Ind} plays a crucial role.
In Proposition~\ref{lcondition}, we show that the sortable ideal $I(\Gamma)^{[t]}$ satisfies the $\ell$-exchange property. Using a description of the reduced Gröbner basis of the defining ideal of Rees algebras of monomial ideals with the $\ell$-exchange property, as given by Herzog, Hibi and Vladoiu in~\cite{HHV} (see Theorem~\ref{ReesQuadratic}), we conclude that the Rees algebra $\mathcal{R}(I(\Gamma)^{[t]})$ has a quadratic reduced Gröbner basis. From this fact, we obtain Corollary~\ref{finally}, which states that this ring is a Koszul and normal Cohen-Macaulay domain, the ideal $I(\Gamma)^{[t]}$ satisfies the strong persistence property and all of its powers have linear resolutions. 

In Sections~2 and 3, we study the  Gorenstein property, the $a$-invariant and the canonical module of the toric ring $R_{\Gamma}$, via its divisor class group. The family of toric rings attached to  simplicial complexes were introduced first in~\cite{HMQ} and further studied in~\cite{FHS}, where their divisor class group was investigated.

When $\Gamma$ is flag, i.e., $2$-flag, $\Delta$ may be viewed as a graph $G$ and the ring $R_{\Gamma}$ is the toric ring of the stable set polytope of $G$. Moreover, the sortability of $\Gamma$ is equivalent to  $G$ being a proper interval graph. Notice that any proper interval graph is perfect. For a perfect $G$  and $\Gamma=\Ind(G)$, in~\cite{HO1} it was shown that $R_{\Gamma}$ is a normal ring. In Section 2, we consider more generally such flag complexes. For $\Gamma=\Ind(G)$ with $G$ a perfect graph, the divisor class group and the set of height one prime ideals of $R_{\Gamma}$ which determine the canonical module $\omega_{R_\Gamma}$ were determined in~\cite[Theorem 1.10, Corollary 1.12]{HMQ}. Using those descriptions, in Proposition~\ref{Prop:a-inv} we show that the $a$-invariant of $R_{\Gamma}$ is equal to $-(\omega(G)+1)$, where $\omega(G)$ denotes the clique number of $G$. 
In Theorem~\ref{Thm:IndG-perfect}, we recover a result by Hibi and Ohsugi~\cite{HO} which gives a combinatorial characterization for the Gorenstein property of $R_{\Gamma}$, when $\Gamma$ is the independence complex of a perfect graph. We provide a new short proof based on divisorial computations developed in \cite{HMQ} and \cite{FHS}. A fundamental fact  employed in our arguments is that a normal Cohen-Macaulay domain $R$, with the canonical module $\omega_{R}$, is Gorenstein if and only if the so-called \textit{canonical class} $[\omega_{R}]$ is zero in the divisor class group $\textup{Cl}(R)$. These results are applied to $\Gamma=\Ind(\Delta)$ for a $1$-dimensional unit-interval simplicial complex $\Delta$ in Corollary~\ref{a-invOneDim}.  

In Section 3, we consider $d$-flag sortable sortable simplicial complexes for $d>2$.  In other words, $\Gamma=\Ind(\Delta)$ for a unit-interval simplicial complex $\Delta$ with $\dim(\Delta)=d-1>1$. The set of height one monomial prime ideals of $R_{\Gamma}$ which do not contain the variable $t$, is known by~\cite[Proposition 1.9]{HMQ}. Our aim is to determine all height one monomial prime ideals containing $t$. We present a family of monomial prime ideals of height one which contain $t$ in Theorem~\ref{Thm:Lj} and Corollary~\ref{Cor:PGamma}. We conjecture that this family of prime ideals is precisely the set of height one monomial prime ideals of $R_{\Gamma}$ which contain $t$. In Proposition~\ref{disoint}, we prove this conjecture for a unit-interval simplicial complex $\Delta$ whose maximal cliques form a partition of the vertex set of $\Delta$. Knowing the subset of monomial prime ideals containing $(t)$ given in Corollary~\ref{Cor:PGamma} enables us to give a necessary condition for the Gorenstein property of $R_{\Gamma}$ in Proposition~\ref{Cor:PGamma1} and give an upper bound for the $a$-invariant of $R_{\Gamma}$ in Proposition~\ref{bound-a}. 
Moreover, in Corollary~\ref{Cor:radical} we determine when $(t)$ is a radical ideal, which leads to a simpler description for the canonical module of $R_{\Gamma}$.   

Finally, in Section~4 we consider interval simplicial complexes which extend the notion of unit-interval simplicial complexes to non-pure simplicial complexes. In Theorem~\ref{Thm:vd}, 
we show that for any interval simplicial complex $\Delta$, the independence complex $\Ind(\Delta)$ is vertex decomposable. This implies that  a $d$-flag sortable simplicial complex is Cohen-Macaulay if and only if it is pure.

\section{Characterization of  $d$-flag sortable simplicial complexes and their associated Rees algebras}\label{sec:1}

In this section, we study Rees algebras and ideals associated to $d$-flag sortable simplicial complexes. To this end, we begin by providing a characterization of these complexes, which is equivalent to describing pure $(d-1)$-dimensional simplicial complexes $\Delta$ for which the independence complex of $\Delta$ is sortable.

We begin by recalling relevant notation and   definitions.
Hereafter, $\Delta$ is a simplicial complex on the vertex set $[n]=\{1,2,\dots,n\}$ and $S=K[x_1,\ldots,x_n]$ is the polynomial ring over a field $K$. We say that a face $F\in\Delta$ has dimension $d$, if $|F|=d+1$. The {\em dimension} of $\Delta$ is defined as the maximum dimension of the faces of $\Delta$ and is denoted by $\dim(\Delta)$.  A \textit{facet} of $\Delta$ is a maximal face of $\Delta$ with respect to inclusion. We denote by $\mathcal{F}(\Delta)$, the set of all facets of $\Delta$, and we say that $\Delta$ is \textit{pure} if all facets of $\Delta$ have the same dimension. 
Furthermore, the {\em facet ideal} of $\Delta$ is defined as $I(\Delta)=(\textbf{x}_F: F\in \mathcal{F}(\Delta))$.

Now, let $\Delta$ be a pure $(d-1)$-dimensional simplicial complex $\Delta$. A subset $C\subseteq[n]$ is called a \textit{clique} of $\Delta$, if all $d$-subsets of $C$ are facets of $\Delta$.
A subset $D\subseteq[n]$ is called an \textit{independent set} of $\Delta$, if $D$ contains no facet of $\Delta$. The collection of all independent sets of $\Delta$ is a simplicial complex, called the \textit{independence complex} of $\Delta$, and denoted by $\Ind(\Delta)$.

We recall the concept of \textit{sortability} from \cite{HHM}. Let $u$ and $v$ be two monomials of $S$, and write
$$
uv=x_{i_1}x_{i_2}\cdots x_{i_{r}} \quad \text{with}\quad i_1\leq i_2\leq \cdots \leq i_{r}.
$$
We define the \textit{sorting} of the pair $(u,v)$ as
\[
\sort(u,v)=(u',v'),\quad \text{with}\quad u'=\prod_{j\in F}x_{i_j},\ \ \ v'=\prod_{j\in G}x_{i_j},
\]
where $F=\{j:\ 1\leq j\leq r,\ j\textrm{ is odd}\}$ and $G=\{j:\ 1\leq j\leq r,\ j\textrm{ is even}\}$. If $\sort(u,v)=(u,v)$, the pair $(u,v)$ is called a \textit{sorted pair}, and otherwise it is called an \textit{unsorted pair}. A finite set of monomials $\mathcal{M}\subset S$ is called \textit{sortable}, if $\sort(u,v)\in\mathcal{M}\times\mathcal{M}$ for all $u,v\in\mathcal{M}$. 

Now, let $\Delta$ be a simplicial complex. We say that $\Delta$ is \textit{sortable}, if the set of monomials $\mathcal{M}_\Delta=\{{\bf x}_F:F\in\Delta\}$ 
is sortable. Here ${\bf x}_F=\prod_{i\in F}x_i$ for a non-empty subset $F$ of $[n]$ and ${\bf x}_\emptyset=1$. Given $F,G\in\Delta$, we set $\sort(F,G)=(F',G')$, where the sets $F',G'$ are defined by the equation $\sort({\bf x}_F,{\bf x}_G)=({\bf x}_{F'},{\bf x}_{G'})$.


Given positive integers $i\le j$, we denote by $[i,j]$ the interval $\{i,i+1,\dots,j\}$. Following \cite{AV} and \cite{BSV},  a simplicial complex $\Delta$ of dimension $d-1$ is called a \textit{unit-interval simplicial complex} if $\Delta$ is pure, and for any facet $F=\{i_1<\dots<i_{d}\}\in\Delta$, the interval $[i_1,i_{d}]$ is a clique of $\Delta$. For such a simplicial complex, the maximal cliques of $\Delta$ are intervals. 

The following theorem gives a characterization of $d$-flag sortable simplicial complexes.

\begin{Theorem}\label{Thm:Ind}
Let $\Delta$ be a pure $(d-1)$-dimensional simplicial complex. Then $\Ind(\Delta)$ is sortable if and only if $\Delta$ is a unit-interval simplicial complex.
\end{Theorem}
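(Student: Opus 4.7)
The plan is to prove the two directions of the equivalence separately.

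For the backward direction, assume $\Delta$ is unit-interval and let $F, G \in \Ind(\Delta)$. Writing $\sort({\bf x}_F, {\bf x}_G) = ({\bf x}_{F'}, {\bf x}_{G'})$ and listing the sorted multiset $F \sqcup G$ as $c_1 \leq c_2 \leq \cdots \leq c_N$ with $N = |F| + |G|$, we have $F' = \{c_1, c_3, \ldots\}$ and $G' = \{c_2, c_4, \ldots\}$. Suppose for contradiction that $F'$ contains some facet $H = \{h_1 < \cdots < h_d\}$ of $\Delta$, say with $h_r = c_{2s_r - 1}$ and $s_1 < \cdots < s_d$. The unit-interval hypothesis makes $[h_1, h_d]$ a clique. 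Since all $c_i$ with $2s_1 - 1 \leq i \leq 2s_d - 1$ lie in $[h_1, h_d]$, at least $2(s_d - s_1) + 1 \geq 2d - 1$ entries of the multiset $F \sqcup G$ fall in $[h_1, h_d]$. By pigeonhole between the two sources $F$ and $G$, at least $d$ of them come from one of $F$ or $G$; those form a $d$-subset of the clique $[h_1, h_d]$, hence a facet of $\Delta$ contained in $F$ or in $G$, contradicting independence. The case of $G'$ containing a facet is symmetric.

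For the forward direction, I would argue by contradiction. Suppose $\Ind(\Delta)$ is sortable but $\Delta$ is not unit-interval; then there exist a facet $F = \{i_1 < \cdots < i_d\}$ and a $d$-subset $H \subseteq [i_1, i_d]$ which is not a facet, whence $H \in \Ind(\Delta)$ since a $d$-set is independent iff it is not a facet. The plan is to induct on $|F \triangle H|$. In the base case $|F \triangle H| = 2$, write $H = (F \setminus \{i_k\}) \cup \{j\}$ with $i_\ell < j < i_{\ell + 1}$, and set $A := F \setminus \{i_\ell\}$ if $\ell < k$, or $A := F \setminus \{i_{\ell + 1}\}$ if $\ell \geq k$. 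Since $A$ is a $(d-1)$-subset of $F$ it lies in $\Ind(\Delta)$, and a direct computation of the sorted multiset $H \sqcup A$, using that most $i_m$ appear with multiplicity $2$ while $j$ and one of the $i_m$ appear with multiplicity $1$, yields $\sort({\bf x}_H, {\bf x}_A) = ({\bf x}_F, {\bf x}_V)$ for some $V$. This contradicts sortability, since $F$ is a facet and hence not in $\Ind(\Delta)$.

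For the inductive step $|F \triangle H| \geq 4$, I would choose a single swap $F_1 := (F \setminus \{i\}) \cup \{h\}$ with $h \in H \setminus F$ and $i \in F \setminus H$ so that $H \subseteq [\min F_1, \max F_1]$. Such a choice exists: if $F \setminus H$ contains an element $i$ distinct from both $i_1$ and $i_d$, then $\min F_1 = i_1$ and $\max F_1 = i_d$; the only remaining subcase is $|F \triangle H| = 4$ with $F \setminus H = \{i_1, i_d\}$, which is handled by taking $i = i_1$ and $h = \min(H \setminus F)$. If $F_1$ is not a facet, then $(F, F_1)$ is a base-case pair and yields a contradiction; if $F_1$ is a facet, then $(F_1, H)$ has strictly smaller symmetric difference while still satisfying $H \subseteq [\min F_1, \max F_1]$, so the inductive hypothesis applies. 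The main obstacle is the base case: producing the right auxiliary $A$ requires a case split on whether $j$ lies to the left or the right of the removed element $i_k$, and in each subcase the sort must be computed explicitly to confirm that the odd positions of $H \sqcup A$ form precisely $F$.
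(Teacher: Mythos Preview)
Your proof is correct and follows essentially the same approach as the paper: the backward direction is the same clique/pigeonhole count, and for the forward direction both arguments sort a single-swap non-facet $H$ together with a well-chosen $(d-1)$-subset of $F$ so that the odd positions recover $F$. The only difference is organizational---the paper restricts to \emph{adjacent} swaps (its ``1-step exchanges'') and asserts that these reach every $d$-subset of $[i_1,i_d]$, whereas your base case handles an arbitrary single swap and you make the reachability explicit via induction on $|F\triangle H|$.
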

\begin{proof}
Suppose that $\Ind(\Delta)$ is sortable. Let $F=\{i_1<i_2<\dots<i_{d}\}$ be a facet of $\Delta$. We need to show that the interval $B=[i_1,i_{d}]$ is a clique of $\Delta$. Let $\ell\in B\setminus F$. We have $i_{j-1}<\ell<i_j$ for some $2\le j\le d$. Consider the $d$-subsets of $B$ defined as follows,
\begin{align*}
    G\ &=\ \{i_1<\dots<i_{j-1}<\ell<i_{j+1}<\dots<i_{d}\},\\
    H\ &=\ \{i_1<\dots<i_{j-2}<\ell<i_{j}<i_{j+1}<\dots<i_{d}\}.
\end{align*}
We say that $G$ and $H$ are obtained from $F$ by a \textit{1-step exchange}. It is clear that any $d$-subset of $B$ can be obtained from $F$ by performing finitely many 1-step exchanges. We claim that $G$ and $H$ are again facets of $\Delta$. Then, iterations of 1-step exchanges yield that $B$ is indeed a clique of $\Delta$.

Assume by contradiction that $G\notin\mathcal{F}(\Delta)$. Then $G\in\Ind(\Delta)$. Furthermore, the set $F_1=F\setminus\{i_{j-1}\}$ is also independent, for it has size $d-1$. Since $\Ind(\Delta)$ is sortable, it follows that $\sort(F_1,G)\in\Ind(\Delta)\times\Ind(\Delta)$. However, $\sort(F_1,G)=(F,G_1)$, where $G_1=G\setminus\{i_{j-1}\}$. This is absurd, since $F\in\mathcal{F}(\Delta)$. Hence, we have $G\in\mathcal{F}(\Delta)$. Similarly, one can show that $H$ is a facet of $\Delta$.

Conversely, suppose that $\Delta$ is a unit-interval simplicial complex of dimension $d-1$. We show that $\Ind(\Delta)$ is sortable. Let $F,G\in\Ind(\Delta)$ with $|F|=r$ and $|G|=s$, and write ${\bf x}_F{\bf x}_G=x_{i_1}x_{i_2}\cdots x_{i_{r+s}}$ with $i_1\leq i_2\leq \cdots \leq i_{r+s}$. We need to show that $\sort(F,G)=(F',G')\in\Ind(\Delta)\times\Ind(\Delta)$, where $F'=\{i_k:\ 1\leq k\leq r+s, k\textrm{ is odd}\}$ and $G'=\{i_k:\ 1\leq k\leq r+s, k\textrm{ is even}\}$. Notice that if $r+s\le 2(d-1)$, then $|F'|\le d-1$ and $|G'|\le d-1$. So both $F'$ and $G'$ are independent, that is $(F',G')\in\Ind(\Delta)\times\Ind(\Delta)$. Now, suppose that $r+s\ge 2d-1$, and by contradiction assume that at least one of $F'$ and $G'$ is not an independent set of $\Delta$, say $F'$. Then $F'$ contains a facet $H=\{j_1<j_2<\dots<j_{d}\}\in\Delta$. By our assumption, the interval $B=[j_1,j_{d}]$ is a clique of $\Delta$. Notice that each $j_p$ is equal to $i_{2q-1}$ for some $q$. Therefore, the set $A=\{k:\,j_1\le i_k\le j_{d}\}$ contains at least $2d-1$ elements.
Since $F$ and $G$ are independent sets of $\Delta$ and $B$ is a clique of $\Delta$, we have $|B\cap F|\le d-1$ and $|B\cap G|\le d-1$.  These inequalities imply that $|A|\le 2d-2$, which is absurd.
\end{proof}

Given a simplicial complex $\Gamma$, let $R_\Gamma=K[{\bf x}_Ft:F\in\Gamma]\subset S[t]$.  For an integer $i\geq 0$, we denote by $\Gamma^{(i)}=\{F\in\Gamma:\dim F\le i\}$ the \textit{$i$th skeleton} of $\Gamma$, and by $\Gamma^{[i]}=\langle F\in\Gamma:\dim F=i\rangle$ the \textit{$i$th pure skeleton} of $\Gamma$. 
In \cite[Corollary 13]{HKMR}, it was shown that if $\Gamma$ is sortable, then $R_{\Gamma^{[i]}}$ is a Koszul and normal Cohen-Macaulay ring. However, we have more generally,

\begin{Proposition}
Let $\Gamma$ be a sortable simplicial complex.
Then $R_\Gamma$, $R_{\Gamma^{(i)}}$ and $R_{\Gamma^{[i]}}$ are Koszul, normal Cohen-Macaulay domains.
In particular, this holds when $\Gamma=\Ind(\Delta)$ and $\Delta$ is a unit-interval simplicial complex. 
\end{Proposition}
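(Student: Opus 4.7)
The plan is to handle the three rings separately. Since $R_{\Gamma^{[i]}}$ is covered by \cite[Corollary~13]{HKMR}, I focus on $R_\Gamma$ and $R_{\Gamma^{(i)}}$, where the strategy is to establish sortability and then invoke the standard sortability machinery.

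\textbf{Sortability inheritance for $\Gamma^{(i)}$.} If $F, G \in \Gamma^{(i)}$ and $\sort(F, G) = (F', G')$, then $|F'| + |G'| = |F| + |G| \le 2(i+1)$ and $\bigl||F'| - |G'|\bigr| \le 1$, so $|F'|, |G'| \le i+1$. Since sortability of $\Gamma$ places $F', G' \in \Gamma$, we conclude $F', G' \in \Gamma^{(i)}$. (One should note that this inheritance can fail for $\Gamma^{[i]}$: a sort of two faces of $\Gamma^{[i]}$ may yield outputs that lie in $\Gamma$ but are not contained in any $i$-dimensional face of $\Gamma$; this is precisely why the $R_{\Gamma^{[i]}}$ case must be handled separately.)

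\textbf{Consequences of sortability.} For any sortable simplicial complex $\Gamma'$, by \cite[Theorem~1.4]{HHM} (see also \cite[Theorem~6.16]{EH}) the defining toric ideal of $R_{\Gamma'}$ admits a reduced Gr\"obner basis consisting of quadratic binomials arising from sorting relations, and its initial ideal is squarefree. Quadraticity yields Koszulness, squarefreeness combined with Sturmfels' criterion yields normality, and a normal affine semigroup ring is Cohen-Macaulay by Hochster's theorem; moreover $R_{\Gamma'}$ is a domain as a subring of $S[t]$.

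Applying these consequences to $\Gamma' \in \{\Gamma, \Gamma^{(i)}\}$ and invoking \cite[Corollary~13]{HKMR} for $\Gamma^{[i]}$ yields the proposition; the ``in particular'' clause is then immediate from Theorem~\ref{Thm:Ind}. I anticipate no serious obstacle beyond the point already flagged, namely that sortability need not propagate to $\Gamma^{[i]}$ and the HKMR reference is indispensable for that case.
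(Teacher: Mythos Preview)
Your argument is correct and follows essentially the same route as the paper: establish sortability of the relevant complex, invoke \cite[Theorem~1.4]{HHM} to obtain a quadratic (squarefree) Gr\"obner basis for the toric ideal, and deduce Koszul, normal, Cohen--Macaulay domain from the standard references. The only difference concerns $\Gamma^{[i]}$: the paper simply asserts, without further argument, that $\Gamma^{[i]}$ is sortable whenever $\Gamma$ is, and then treats all three rings uniformly; you instead cite \cite[Corollary~13]{HKMR} for that case. Your parenthetical claim that sortability ``can fail'' to pass to $\Gamma^{[i]}$ directly contradicts the paper's assertion, and you give no example---so you should either exhibit a counterexample or soften that remark, though it does not affect the validity of your proof.
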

\begin{proof}
Note that if $\Gamma$ is sortable, then all its (pure) skeletons $\Gamma^{(i)}$ and $\Gamma^{[i]}$ are sortable. Therefore, by \cite[Theorem 1.4]{HHM}, there exist the so-called {\em sorting monomial orders} $<$ on the polynomial rings $K[y_F: F\in\Gamma^{(i)}]$ and $K[y_F: F\in\Gamma^{[i]}]$. So the defining ideal of the toric algebras  $R_{\Gamma^{(i)}}=K[{\bf x}_Ft:F\in\Gamma^{(i)}]$ and $R_{\Gamma^{[i]}}=K[{\bf x}_Ft:F\in\Gamma^{[i]}]$ have  quadratic reduced Gröbner bases with respect to the sorting order, consisting of binomials obtained from the unsorted pairs of monomials in $I(\Gamma^{(i)})$ and $I(\Gamma^{[i]})$, respectively. It follows that these rings are  Koszul, normal Cohen-Macaulay domains, see~\cite[Theorem 2.28, Corollary 4.26]{HHO} and \cite[Theorem 1]{Ho}.

The second statement follows from Theorem \ref{Thm:Ind} and the first statement.
\end{proof}

Next, we study the Rees algebra $\mathcal{R}(I(\Gamma^{[t]}))$ for a sortable simplicial complex $\Gamma$ and a positive integer $t$. To this aim we need to recall the concept of $\ell$-exchange property. The \textit{support} of a monomial $u\in S$ is the set $\supp(u)=\{x_i:\ x_i\ \textup{divides}\ u\}$.

Let $I\subset S$ be an equigenerated monomial ideal, and let $A=K[u:\ u\in \mathcal{G}(I)]$ be the toric algebra attached to $I$. Here $\mathcal{G}(I)$ denotes the minimal monomial generating set of $I$. Then we may write $A\cong R/L$, where $R=K[y_u:\ u\in \mathcal{G}(I)]$ is the polynomial ring and $L$ is the kernel of the $K$-algebra homomorphism $R\to A$ with $y_u\mapsto u$ for any $u\in \mathcal{G}(I)$. We fix a monomial order $<$ on $R$. A monomial $w \in R$  is called a {\em standard monomial} of $L$ with respect to $<$, if  $w\not \in \ini_{<}(L)$.

The concept of $\ell$-exchange
property was defined in~\cite{HHV}, as follows.

\begin{Definition}
\label{l}
{\em Keeping the above notation, we say that $I$ satisfies the \textit{$\ell$-exchange
property} with respect to the monomial order $<$ on $R$, if the following condition is satisfied: let $y_{u_1}\cdots y_{u_N}$ and $y_{v_1}\cdots y_{v_N}$ be two standard monomials of $L$ with respect to $<$  such that
\begin{enumerate}
\item[(i)]
$\deg_{x_r} (u_1\cdots u_N)=\deg_{x_r} (v_1\cdots v_N)$ for $r=1,\ldots,q-1$ with $q\leq n-1$,
\item [ (ii)]
$\deg_{x_q}(u_1\cdots u_N)<\deg_{x_q}(v_1\cdots v_N)$.
\end{enumerate}
Then there exists an integer $k$,  and an integer $q<j\leq n$ with $x_j\in\supp(u_k)$ such that  $x_qu_k/x_j\in I$.}
\end{Definition}

\begin{Proposition}
\label{lcondition}
Let $\Delta$ be a unit-interval simplicial complex on the vertex set $[n]$, and let $\Gamma=\Ind(\Delta)$. Then for all $t\geq 1$,  the ideal $I(\Gamma^{[t]})$ satisfies the $\ell$-exchange property with respect to the sorting order $<$.
\end{Proposition}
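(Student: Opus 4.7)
The plan is to exploit the explicit description of standard monomials with respect to the sorting order. Writing the two product monomials in sorted form as $\mathbf{u}:=u_1 \cdots u_N = x_{a_1} \cdots x_{a_M}$ and $\mathbf{v}:=v_1 \cdots v_N = x_{b_1} \cdots x_{b_M}$ with $M = N(t+1)$, standardness is equivalent to $u_k = \prod_{r=0}^{t} x_{a_{k+rN}}$ and $v_k = \prod_{r=0}^{t} x_{b_{k+rN}}$. Setting $m = \sum_{r<q}\deg_{x_r}\mathbf{u}$, $m' = m + \deg_{x_q}\mathbf{u}$, and $m'' = m + \deg_{x_q}\mathbf{v}$, conditions (i) and (ii) of Definition~\ref{l} translate to $a_i = b_i$ for $i \le m$; $a_i = q$ precisely for $m < i \le m'$; $b_i = q$ precisely for $m < i \le m''$; and $m' < m''$. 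Since each $v_k$ corresponds to an independent, hence squarefree, face $G_k \in \Gamma$, at most one position in the index set $\{k, k+N, \dots, k+tN\}$ can carry the value $q$, so $\deg_{x_q}\mathbf{v} \le N$ and therefore $\deg_{x_q}\mathbf{u} < N$.

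My choice of $k$ and $j$ for the $\ell$-exchange is then as follows. Let $p := m'+1$ and write $p = k + lN$ with $k \in \{1,\dots,N\}$ and $l \ge 0$. Take $j := a_p$, so $j > q$ while $b_p = q$. The strict inequality $\deg_{x_q}\mathbf{u} < N$ forces every position $k+rN$ with $r<l$ to be at most $p - N = m'+1-N \le m$, so $a_{k+rN} = b_{k+rN} < q$; in particular $q \notin F_k := \{a_{k+rN}\}_{r=0}^{t}$, and $j$ is the smallest element of $F_k$ lying in $(q,n]$. It therefore suffices to prove that $F_k' := (F_k \setminus \{j\}) \cup \{q\}$ is an independent set of $\Delta$, since then $x_q u_k / x_j = \mathbf{x}_{F_k'}$ is a minimal generator of $I(\Gamma^{[t]})$.

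I argue $F_k' \in \Gamma$ by contradiction. If $F_k'$ contains a facet $H = \{h_1 < \cdots < h_d\}$ of $\Delta$, then $q \in H$ (otherwise $H \subseteq F_k$, contradicting $F_k \in \Gamma$). By Theorem~\ref{Thm:Ind}, $\Delta$ is unit-interval, so $[h_1,h_d]$ is a clique of $\Delta$; one checks that any maximal clique of a unit-interval simplicial complex is itself an interval, so I may enlarge $[h_1,h_d]$ to a maximal clique $B = [l_B, r_B]$ with $l_B \le q \le r_B$. Since $F_k$ is independent and $B$ is a clique, $|F_k \cap B| \le d-1$, while $H \setminus \{q\} \subseteq F_k \cap B$ forces $|F_k \cap B| = d-1$ and $F_k \cap B = H \setminus \{q\}$. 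As $j \notin H$ but $j \in F_k$, this yields $j \notin B$, and combined with $j > q \ge l_B$ it gives $r_B < j$.

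The final step compares $F_k$ and $G_k$ on $B$. Writing $A = \{a_{k+rN} : r < l\}$, $C = \{a_{k+rN} : r > l\}$, and $C' = \{b_{k+rN} : r > l\}$, the decompositions $F_k \cap B = (A \cap B) \cup (C \cap B)$ and $G_k \cap B = (A \cap B) \cup \{q\} \cup (C' \cap B)$, together with $|G_k \cap B| \le d - 1$, imply $|C \cap B| \ge |C' \cap B| + 1 \ge 1$. On the other hand, sortedness of $u_k$ forces $C \subseteq (j, n]$, and $B \subseteq [1, r_B]$ with $r_B < j$, so $C \cap B = \emptyset$, a contradiction. The main subtlety in this approach is singling out the \emph{correct} element $j$ to swap out of $F_k$: one cannot hope to replace an arbitrary element of $F_k \cap (q,n]$ by $q$, but the sorting order pinpoints $j$ as the smallest such element, and it is precisely this minimality, via the interval structure of $B$, that rules out a would-be obstructing maximal clique.
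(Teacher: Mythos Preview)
Your proof is correct and follows the same overall strategy as the paper's: both use the sorted structure to single out the same index $k$ and the same element $j$ (the smallest element of $\supp(u_k)$ exceeding $q$), and both then argue by contradiction that $(F_k\setminus\{j\})\cup\{q\}$ is independent. The only real difference is in how the contradiction is closed. The paper observes that a hypothetical facet $H\subseteq(F_k\setminus\{j\})\cup\{q\}$ must contain some $k_s>j$ (otherwise $H\subseteq\supp(v_k)$, which is independent), and then invokes the unit-interval property directly: since $q,k_s\in H$ and $q<j<k_s$, the set $(H\setminus\{q\})\cup\{j\}$ is again a facet of $\Delta$ and lies inside $F_k$. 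Your argument instead passes to a maximal clique interval $B\supseteq H$, deduces $r_B<j$ from $j\notin B$, and derives the contradiction by comparing $|F_k\cap B|=d-1$ with $|G_k\cap B|\le d-1$ to force $C\cap B\neq\emptyset$ while $C\subseteq(j,n]$. Both endings are short and equivalent in strength; the paper's swap is slightly more direct (it never needs the maximal clique), whereas your counting argument makes the role of the clique structure more explicit.
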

\begin{proof} Let $I=I(\Gamma^{[t]})$, and let $y_{u_1}\cdots y_{u_N}$ and $y_{v_1}\cdots y_{v_N}$ be standard  monomials of $R$ with respect to the sorting order $<$ and satisfying (i) and (ii) of Definition~\ref{l}. 
Since the non-sorted pairs correspond to initial terms in the sorting order, and $y_{u_1}\cdots y_{u_N}$ and $y_{v_1}\cdots y_{v_N}$ do not belong to $\ini_{<}(L)$, we conclude that all the pairs $(u_i,u_j)$ and $(v_i,v_j)$ are sorted for any $i<j$. Let $u_j=x_{i_{j,1}}\cdots x_{i_{j,t+1}}$ and $v_j=x_{i'_{j,1}}\cdots x_{i'_{j,t+1}}$ for any $1\leq j\leq N$. Then by \cite[Relation (6.3)]{EH},
\[
i_{1,1}\leq i_{2,1}\leq\!\cdots\!\leq\ i_{N,1}\leq i_{1,2}\leq i_{2,2}\leq\!\cdots\!\leq i_{N,2} \leq\!\cdots\!\leq i_{1,t+1} \leq i_{2,t+1} \leq\!\cdots\!\leq i_{N,t+1}
\]
and
\[
i'_{1,1}\leq i'_{2,1}\leq\!\cdots\!\leq\ i'_{N,1}\leq i'_{1,2}\leq i'_{2,2}\leq\!\cdots\!\leq i'_{N,2} \leq\!\cdots\!\leq i'_{1,t+1} \leq i'_{2,t+1} \leq\!\cdots\!\leq i'_{N,t+1}.
\]
By assumption $\deg_{x_r} (u_1\cdots u_N)=\deg_{x_r} (v_1\cdots v_N)$ for $r=1,\ldots,q-1$ with $q\leq n-1$. So from the above inequalities we obtain $i_{j,k}=i'_{j,k}$  for any $i_{j,k}\leq q-1$. Hence
$\deg_{x_r}(u_j)=\deg_{x_r}(v_j)$ for  all $1\leq j\leq N$ and $1\leq r\leq q-1$. By our assumption there exists an integer $1\leq m\leq N$ such that $\deg_{x_q}(u_m)<\deg_{x_q}(v_m)$.

Let $u_m=x_{k_1}x_{k_2}\cdots x_{k_t}$, $v_m=x_{\ell_1}x_{\ell_2}\cdots x_{\ell_t}$ such that $k_1<\cdots< k_t$ and $\ell_1<\cdots<\ell_t$ and $q=\ell_i$ for some $1\leq i<t$. Then $k_1=\ell_1,\ldots,k_{i-1}=\ell_{i-1}$ and $k_i>\ell_i=q$. We show that $x_{\ell_i}u_m/x_{k_i}\in I$. Suppose that this is not the case. Then $(\supp(u_m)\setminus \{x_{k_i}\})\cup\{x_{\ell_i}\}$ contains a facet $F$ of $\Delta$. Obviously, $x_{\ell_i}\in F$. We claim that $x_{k_s}\in F$ for some $s>i$. Otherwise, since $k_1=\ell_1,\ldots,k_{i-1}=\ell_{i-1}$, we get $F\subseteq\{x_{\ell_1},\ldots,x_{\ell_{i-1}},x_{\ell_i}\}\subseteq\supp(v_m)$, which contradicts to $v_m\in I$. So we have $x_{k_s}\in F$ for some $s>i$. Then, we have $\ell_i<k_i<k_s$. Since $x_{\ell_i},x_{k_s}\in F$ and $\Delta$ is a unit-interval simplicial complex, in both cases it follows that $F'=(F\setminus\{\ell_i\})\cup\{k_i\}\in\Delta$. Notice that $F'\subseteq\supp(u_m)$, with $|F'|=|F|$. Since $\Delta$ is pure, these imply that $F'$ is a facet of $\Delta$, and $\supp(u_m)\notin\Gamma$. So we get $u_m\notin I(\Gamma^{[t]})$, which is a contradiction. Thus $x_{\ell_i}u_m/x_{k_i}\in I$, as desired.
\end{proof}

The next result follows by combining \cite[Theorem 5.1]{HHV} with \cite[Theorem 6.16]{EH}.

\begin{Theorem}\label{ReesQuadratic}
Let $I\subset S$ be an equigenerated monomial ideal which is sortable and satisfies the $\ell$-exchange property with respect to the sorting order. Then there exists a monomial order $<$ on the polynomial ring $T=S[y_u: u\in \mathcal{G}(I)]$ such that the reduced Gr\"obner basis of the defining ideal $J\subset T$ of the Rees algebra $\mathcal{R}(I)$  with respect to $<$ is  quadratic.
\end{Theorem}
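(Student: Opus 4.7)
The plan is to verify the hypotheses of \cite[Theorem 5.1]{HHV} and then invoke it directly, using \cite[Theorem 6.16]{EH} to supply the missing ingredient; the statement is essentially a repackaging of these two cited results.

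First I would exploit the sortability of $I$. By \cite[Theorem 6.16]{EH} (equivalently \cite[Theorem 1.4]{HHM}) there is a sorting monomial order $<$ on $R = K[y_u : u \in \mathcal{G}(I)]$ for which the toric ideal $L = \Ker(R \to K[u : u \in \mathcal{G}(I)])$ admits a quadratic reduced Gr\"obner basis, consisting precisely of the binomials $y_u y_v - y_{u'} y_{v'}$, one for each unsorted pair $(u,v)$ with $\sort(u,v) = (u',v')$, and having leading term $y_u y_v$. This furnishes one of the two hypotheses of \cite[Theorem 5.1]{HHV}.

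Second, I would extend $<$ to a monomial order on $T = S[y_u : u \in \mathcal{G}(I)]$ via the product construction of \cite{HHV}: the sorting order governs the $y$-block, a lexicographic order with $x_1 > x_2 > \cdots > x_n$ governs the $x$-block, and the two blocks are combined so that the $\ell$-exchange binomials $x_q y_{u_k} - x_j y_v$ supplied by Definition~\ref{l} (with $v = x_q u_k / x_j \in \mathcal{G}(I)$ and $q < j$) acquire the leading term $x_q y_{u_k}$. These binomials are automatically quadratic. Together with the quadratic binomials from $L$ lifted to $T$, they form the proposed Gr\"obner basis for the defining ideal $J$ of $\mathcal{R}(I)$, viewed as the kernel of the map $T \to \mathcal{R}(I)$ sending $y_u \mapsto ut$ and $x_i \mapsto x_i$.

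Third, I would apply \cite[Theorem 5.1]{HHV} directly: its two hypotheses, that $L$ has a quadratic Gr\"obner basis in the sorting order and that $I$ satisfies the $\ell$-exchange property with respect to that order, are in place respectively from the first step and from the assumption of the theorem. Its conclusion is precisely that the set consisting of the sorting binomials in the $y$'s and the $\ell$-exchange binomials mixing $x$'s and $y$'s is the reduced Gr\"obner basis of $J$ with respect to $<$. Since all elements of this basis are quadratic, the claim follows.

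The only nontrivial content hidden in this assembly is the internal machinery of \cite[Theorem 5.1]{HHV}, namely the verification that every S-pair among the proposed quadratic generators reduces to zero; the $\ell$-exchange property is tailored so that an S-pair between a sorting binomial and an exchange binomial, or between two exchange binomials, can be rewritten via further exchanges in terms of strictly lower-order elements of the same list. This is where the combinatorial force encapsulated by Proposition~\ref{lcondition} is spent, and invoking \cite[Theorem 5.1]{HHV} lets us use it as a black box rather than re-deriving it here.
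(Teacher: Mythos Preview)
Your proposal is correct and follows exactly the route the paper indicates: the paper does not give a standalone proof of this theorem but simply records that it ``follows by combining \cite[Theorem 5.1]{HHV} with \cite[Theorem 6.16]{EH},'' which is precisely the assembly you carry out. Your additional elaboration on the structure of the exchange binomials and the product order is a faithful unpacking of what those two citations provide.
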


The next corollary generalizes \cite[Corollary 16]{HKMR}, where the following result was shown for flag ($2$-flag) sortable complexes.

\begin{Corollary}
\label{finally}
Let  $\Gamma$ be a $d$-flag sortable simplicial complex. Then
\begin{enumerate}
\item [\textup{(a)}] $\mathcal{R}(I(\Gamma^{[t]}))$ is a Koszul, normal Cohen-Macaulay domain. 
\item[\textup{(b)}] All powers of $I(\Gamma^{[t]})$ have linear resolutions. 
\item[\textup{(c)}] $I(\Gamma^{[t]})$ satisfies the strong persistence property.
\end{enumerate}
\end{Corollary}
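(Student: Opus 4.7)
The plan is to chain together Theorem~\ref{Thm:Ind}, Proposition~\ref{lcondition}, and Theorem~\ref{ReesQuadratic} to produce a quadratic reduced Gr\"obner basis for the defining ideal of $\mathcal{R}(I(\Gamma^{[t]}))$, and then extract (a), (b), (c) as standard consequences. More precisely, by Theorem~\ref{Thm:Ind} the hypothesis ``$\Gamma$ is $d$-flag sortable'' is equivalent to $\Gamma=\Ind(\Delta)$ for some unit-interval simplicial complex $\Delta$ of dimension $d-1$. Every pure skeleton of a sortable simplicial complex is again sortable, so the generators of $I(\Gamma^{[t]})$ form an equigenerated sortable set of monomials. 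Proposition~\ref{lcondition} then guarantees that $I(\Gamma^{[t]})$ satisfies the $\ell$-exchange property with respect to the sorting order, and applying Theorem~\ref{ReesQuadratic} produces a monomial order $<$ on $T=S[y_u:u\in\mathcal{G}(I(\Gamma^{[t]}))]$ for which the defining ideal $J$ of $\mathcal{R}(I(\Gamma^{[t]}))$ has a quadratic reduced Gr\"obner basis.

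Once this is in hand, part (a) follows quickly: Koszulness is Fr\"oberg's theorem applied to the quadratic Gr\"obner basis. By inspection of the $\ell$-exchange construction, the leading monomials of the binomials in the reduced Gr\"obner basis of $J$ are squarefree, so $\ini_<(J)$ is a squarefree monomial ideal; Sturmfels' theorem (cf.\ \cite[Theorem 1]{Ho}) then yields normality of $\mathcal{R}(I(\Gamma^{[t]}))$, and Hochster's theorem gives the Cohen-Macaulay property for normal affine semigroup rings.

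For (b), the quadratic Gr\"obner basis furnished by Theorem~\ref{ReesQuadratic} is of ``$x$-condition'' type in the sense of Herzog-Hibi-Vladoiu, meaning that each leading monomial of $J$ is divisible by some $y$-variable; this is known to force every power of an equigenerated monomial ideal to have a linear resolution (cf.\ the general principle underlying \cite[Theorem 5.1]{HHV}). For (c), the strong persistence property $\bigl(I(\Gamma^{[t]})^{k+1}:I(\Gamma^{[t]})\bigr)=I(\Gamma^{[t]})^k$ is a standard consequence of the normality of the Rees algebra, once the equality of the ordinary powers with their integral closures is at one's disposal. The main technical burden lies already in Proposition~\ref{lcondition} and in extracting the squarefreeness and $x$-condition features from the proof of Theorem~\ref{ReesQuadratic}; with those in hand the corollary reduces to routine bookkeeping against well-known theorems on toric-Rees algebras, exactly paralleling the flag case treated in \cite[Corollary 16]{HKMR}.
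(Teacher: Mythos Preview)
Your proposal is correct and follows essentially the same route as the paper: both chain Theorem~\ref{Thm:Ind}, Proposition~\ref{lcondition}, and Theorem~\ref{ReesQuadratic} to obtain a quadratic reduced Gr\"obner basis for the defining ideal of $\mathcal{R}(I(\Gamma^{[t]}))$, and then read off (a), (b), (c) from standard results. The paper cites \cite[Theorem~2.28, Corollary~4.26]{HHO}, \cite[Corollary~1.2]{HHZ}, and \cite[Corollary~1.6]{HQ} for these three consequences, which package exactly the Fr\"oberg/Sturmfels/Hochster, $x$-condition, and normality-implies-strong-persistence arguments you spell out.
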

\begin{proof}
Since $\Gamma$ is $d$-flag, we have $\Gamma=\Ind(\Delta)$, where $\Delta$ is 
a pure simplicial complex of dimension $d-1$. By Theorem~\ref{Thm:Ind}, the sortability of $\Gamma$ implies that $\Delta$ is a unit-interval simplicial complex. So by  Proposition~\ref{lcondition}, $I(\Gamma^{[t]})$  satisfies the $\ell$-exchange
property with respect to the sorting order $<$. Since $I(\Gamma^{[t]})$ is a sortable monomial ideal, by Theorem~\ref{ReesQuadratic}, the defining ideal of the Rees algebra $\mathcal{R}(I(\Gamma^{[t]}))$ has a quadratic reduced Gr\"obner basis. Using this fact, then (a), (b) and (c) follow respectively from \cite[Theorem 2.28, Corollary 4.26]{HHO}, \cite[Corollary 1.2]{HHZ} and \cite[Corollary 1.6]{HQ}.
\end{proof}

\section{The toric ring of independence complex of perfect graphs}\label{sec:2}

Let $\Gamma$ be a $d$-flag sortable simplicial complex.  We study the Gorenstein property and the $a$-invariant of the normal toric ring $R_\Gamma$ by investigating the divisor class group $\textup{Cl}(R_\Gamma)$ of this ring.
As discussed in Section~\ref{sec:1}, $R_\Gamma$ is normal and $\Gamma=\Ind(\Delta)$, where $\Delta$ is a unit-interval simplicial complex of dimension $d-1$.
In this section, we consider the case $d=2$. The case $d>2$ will be discussed in Section~\ref{sec:3}.

When $d=2$, we may view $\Delta$ as a graph whose edges are the facets of $\Delta$. Then $\Gamma$ is the independence complex of a proper interval graph $G$, see \cite{HKMR}. In this section, we consider more generally the family of perfect graphs, which indeed includes proper interval graphs. Recall that a graph $G$ is called a {\em perfect} graph, if $G$ and $G^c$ do not contain induced odd cycles of length $r>3$. Here, by $G^c$ we denote the \textit{complementary graph} of $G$. That is $V(G^c)=V(G)$ and the edges of $G^c$ are the non-edges of $G$.

As was mentioned in the introduction, when $\Gamma=\Ind(G)$ and $G$ is a perfect graph, the ring $R_{\Gamma}$ is a normal Cohen-Macaulay domain. Moreover, the divisor class group and the set of height one prime ideals of $R_{\Gamma}$ which determine the canonical module $\omega_{R_\Gamma}$ of $R_\Gamma$ were described in~\cite[Theorem 1.10, Corollary 1.12]{HMQ}.

The $K$-algebra $R_{\Gamma}$ is standard graded if we put $\deg(x_1^{a_1}\cdots x_n^{a_n}t^k)=k$. Recall that the \textit{$a$-invariant} of a graded Cohen-Macaulay ring $R$ admitting a graded canonical module $\omega_R$ is defined as $a(R)=-\min\{j:(\omega_{R})_j\ne0\}$.

The {\em clique number} of a graph $G$ is defined as the maximum cardinality of cliques of $G$, and is denoted by $\omega(G)$.  

\begin{Proposition}\label{Prop:a-inv}
Let $G$ be a perfect graph and    $\Gamma=\Ind(G)$. Then $a(R_{\Gamma})=-(\omega(G)+1)$. In particular, $x_1\cdots x_nt^{\omega(G)+1}\in\omega_{R_\Gamma}$.
\end{Proposition}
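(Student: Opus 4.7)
The plan is to use the description of $\omega_{R_\Gamma}$ via height-one monomial primes provided by \cite[Corollary 1.12]{HMQ}, as recalled at the start of the section. For a perfect graph $G$, the height-one monomial primes of $R_\Gamma$ enumerated in \cite[Theorem 1.10]{HMQ} come in two families, reflecting Chv\'atal's polyhedral characterization of perfection: primes $P_i$ arising from the non-negativity inequalities $x_i \geq 0$ (one per vertex $i$), and primes $P_C$ arising from the clique inequalities $\sum_{i \in C} x_i \leq 1$ (one per maximal clique $C$ of $G$). Under this identification, a monomial $m = x_1^{a_1}\cdots x_n^{a_n} t^k$ of $R_\Gamma$ lies in $\omega_{R_\Gamma}$ if and only if $a_i \geq 1$ for every vertex $i$ and $\sum_{i \in C} a_i \leq k-1$ for every maximal clique $C$.

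Given this combinatorial description, the lower bound $a(R_\Gamma) \leq -(\omega(G)+1)$ is immediate: for any monomial $m \in \omega_{R_\Gamma}$ and any maximum clique $C$ of $G$, the two families of conditions force
\[
\omega(G) \;=\; |C| \;\leq\; \sum_{i \in C} a_i \;\leq\; k - 1,
\]
so $k \geq \omega(G)+1$.

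For the reverse inequality and the ``in particular'' clause, I would exhibit $m^\ast := x_1 \cdots x_n\, t^{\omega(G)+1}$ as an explicit witness. First, $m^\ast \in R_\Gamma$: since $G$ is perfect, the chromatic number of $G$ equals $\omega(G)$, hence $[n]$ decomposes as a disjoint union of independent sets $F_1 \sqcup \cdots \sqcup F_{\omega(G)}$ (the color classes), and consequently
\[
x_1\cdots x_n\, t^{\omega(G)+1} \;=\; \Big(\prod_{j=1}^{\omega(G)} {\bf x}_{F_j}\, t\Big)\cdot t,
\]
with $t = {\bf x}_\emptyset\, t \in R_\Gamma$. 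The interior conditions are trivially satisfied: $a_i = 1 \geq 1$ for all $i$, and for every maximal clique $C$, $\sum_{i \in C} a_i = |C| \leq \omega(G) = k - 1$. Therefore $m^\ast \in \omega_{R_\Gamma}$, which matches the lower bound and establishes both assertions.

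The principal obstacle I anticipate is lining up \cite[Theorem 1.10]{HMQ} with the two explicit families above and checking that no other height-one monomial prime is missed. This is precisely where the perfectness of $G$ is indispensable: Chv\'atal's theorem certifies that the stable set polytope is cut out by the non-negativity and maximal-clique inequalities alone, with no contribution from odd-hole or antihole type inequalities, which is what allows the lower-bound computation above to collapse cleanly to $\omega(G)+1$.
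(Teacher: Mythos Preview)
Your proof is correct and essentially mirrors the paper's: both use the height-one prime description from \cite{HMQ} to reduce membership in $\omega_{R_\Gamma}$ to the strict inequalities $a_i>0$ and $\sum_{i\in C}a_i<k$ for every maximal clique $C$, bound $k$ below by evaluating at a maximum clique, and exhibit $x_1\cdots x_n\,t^{\omega(G)+1}$ as the witness. The one extra step you take---verifying that this witness actually lies in $R_\Gamma$ by factoring it through a proper $\omega(G)$-coloring of $G$---is a nice self-contained touch, though the paper leaves this implicit (it follows from normality of $R_\Gamma$, since $p_u$ satisfies all the facet inequalities and the underlying lattice is $\mathbb{Z}^{n+1}$).
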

\begin{proof}
Let $C_1,\ldots,C_r$ be the minimal vertex covers of the graph $G_{\Gamma}$ whose edge set is $\mathcal{F}(\Gamma^{[1]})$. By \cite[Theorem 1.10]{HMQ}, we have $\omega_{R_\Gamma}=P_{C_1}\cap\cdots\cap P_{C_r}\cap Q_1\cap\dots\cap Q_n$, where $P_{C_i}=({\bf x}_Ft: F\in \Gamma_{C_i})$ and $Q_i=({\bf x}_Ft:F\in\Gamma,i\in F)$.  
Here $\Gamma_{C_i}$ is the induced subcomplex of $\Gamma$ on the set $C_i$.

We set $u=x_1\cdots x_nt^{\omega(G)+1}$ and $p_u=(1,\ldots,1,\omega(G)+1)\in \mathbb{N}^{n+1}$.
Notice that $G_{\Gamma}=G^c$. Moreover, $C_i=[n]\setminus B_i$, where $B_i$ is a maximal independent set of $G_{\Gamma}=G^c$ for $1\leq i\leq r$. The maximal independent sets of $G^c$ are just the maximal cliques of $G$. Therefore, $B_i=[n]\setminus C_i$ for $i=1,\dots,r$ are the maximal cliques of $G$. By the proof of \cite[Theorem 1.3]{HMQ} the support form of $P_{C_i}$ is $f_i(x)=-\sum_{j\in B_i}x_j+x_{n+1}=0$ for $i=1,\dots,r$. Moreover, by \cite[Proposition 1.9]{HMQ}, the support form of $Q_j$ is $g_j(x)=x_j$ for all $j$.  First we show that $u\in\omega_{R_\Gamma}$.
Notice that $f_i(p_u)=-|B_i|+\omega(G)+1>0$, since $|B_i|\leq \omega(G)$ for all $i$. So $u\in P_{C_i}$ for all $i$. Moreover, $g_j(p_u)=1>0$ and hence $u\in Q_j$ for all $j$. Therefore, $u\in \omega_{R_\Gamma}$. This shows that
$a(R_{\Gamma})\geq -(\omega(G)+1)$.
Now, consider an element $v=x_1^{a_1}\cdots x_n^{a_n}t^{k}\in \omega_{R_\Gamma}$. Then for $p_v=(a_1,\ldots,a_n,k)$ we have $f_i(p_v)>0$ and $g_j(p_v)>0$ for all $i$ and $j$. It follows that $\sum_{j\in B_i}a_j<k$ for any $1\leq i\leq r$ and $a_j>0$ for all $j$. Hence $|B_i|\leq \sum_{j\in B_i}a_j<k$ for all $1\leq i\leq r$. This implies that $\omega(G)<k$. Hence $k\geq \omega(G)+1$. Therefore, $a(R_{\Gamma})\leq -(\omega(G)+1)$. 
\end{proof}

Next, we provide a new proof of the Gorenstein characterization  of $R_\Gamma$ given in \cite[Theorem 2.1(b)]{HO} in a simpler fashion. 

\begin{Theorem}\label{Thm:IndG-perfect}
    Let $G$ be a perfect graph and  $\Gamma=\Ind(G)$. The following conditions are equivalent.
    \begin{enumerate}
        \item[\textup{(a)}] $R_\Gamma$ is Gorenstein.
        \item[\textup{(b)}] All the maximal cliques of $G$ have the same cardinality.
        \item[\textup{(c)}] $G^c$ is unmixed.
    \end{enumerate}
    Moreover, if the equivalent conditions hold, then $$\omega_{R_\Gamma}=(x_1\cdots x_nt^{\omega(G)+1}).$$
\end{Theorem}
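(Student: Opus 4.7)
The plan is to reduce the Gorenstein question to an equality of divisors. By \cite[Theorem 1.10, Corollary 1.12]{HMQ}, the set $\{P_{C_1},\ldots,P_{C_r},Q_1,\ldots,Q_n\}$ is exactly the full set of height-one monomial primes of the normal toric domain $R_\Gamma$, and $\omega_{R_\Gamma}$ is the divisorial ideal associated with the divisor $\sum_i [P_{C_i}] + \sum_j [Q_j]$. Since $R_\Gamma$ is a normal Cohen-Macaulay domain, it is Gorenstein if and only if $\omega_{R_\Gamma}$ is principal.

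The equivalence (b)$\Leftrightarrow$(c) is a direct translation: as noted in the proof of Proposition \ref{Prop:a-inv}, the sets $B_i=[n]\setminus C_i$ are precisely the maximal cliques of $G$, hence all $C_i$ have equal cardinality if and only if all $B_i$ do, which is condition (b).

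For the core implication (b)$\Rightarrow$(a) together with the moreover clause, I would compute the divisor of the principal ideal $(u)$ with $u=x_1\cdots x_n t^{\omega(G)+1}$. Using the support forms $f_i(x)=-\sum_{j\in B_i}x_j+x_{n+1}$ for $P_{C_i}$ and $g_j(x)=x_j$ for $Q_j$ recalled in the proof of Proposition \ref{Prop:a-inv}, one gets
\[
\nu_{P_{C_i}}(u)=\omega(G)+1-|B_i|,\qquad \nu_{Q_j}(u)=1.
\]
Under (b), $|B_i|=\omega(G)$ for every $i$, so $\textup{div}(u)=\sum_i [P_{C_i}]+\sum_j [Q_j]$ and therefore $(u)=\omega_{R_\Gamma}$, giving simultaneously the Gorenstein property and the explicit formula for the canonical module.

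Conversely, assume (a), so $\omega_{R_\Gamma}$ is principal. Being a monomial ideal, it is generated by a single monomial $u'$. By the $a$-invariant computation of Proposition \ref{Prop:a-inv}, $u'$ has $t$-degree equal to $\omega(G)+1$. Since $u\in\omega_{R_\Gamma}=(u')$, I can write $u=u'w$ with $w\in R_\Gamma$; comparing $t$-degrees forces $w$ to be a nonzero scalar in $K$, hence $(u')=(u)$. Feeding this back into the divisor computation, the equality $\omega_{R_\Gamma}=(u)$ forces $\omega(G)+1-|B_i|=1$ for every $i$, yielding (b). The main delicate point is the fact that the divisor of the monomial $u$ is supported only on the height-one monomial primes listed above, so the comparison of divisorial ideals reduces to the purely combinatorial check of the valuations $\nu_{P_{C_i}}(u)$ and $\nu_{Q_j}(u)$; this is exactly what allows the clique-size condition to be read off directly.
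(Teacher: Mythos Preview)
Your argument is correct and stays within the same divisorial framework as the paper, but the route differs slightly. For the equivalence (a)$\Leftrightarrow$(b), the paper invokes \cite[Theorem 4.3]{FHS}, a general Gorenstein criterion stating that $R_\Gamma$ is Gorenstein if and only if there exists an integer $a$ with $1-\sum_{j=1}^n c_{i,j}=ac_{i,n+1}$ for every support form $f_i(x)=\sum_j c_{i,j}x_j$ of a prime in $\mathcal{P}_\Gamma$; specializing to $f_i(x)=-\sum_{j\in B_i}x_j+x_{n+1}$ gives $1+|B_i|=a$ for all $i$, hence (b). You instead bypass that criterion and compute $\textup{div}(u)$ for $u=x_1\cdots x_nt^{\omega(G)+1}$ directly, matching it against the canonical divisor $\sum_i[P_{C_i}]+\sum_j[Q_j]$. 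Your approach is more self-contained (it does not rely on the \cite{FHS} result) and produces the formula $\omega_{R_\Gamma}=(u)$ simultaneously with (b)$\Rightarrow$(a), whereas the paper derives the formula afterward from the $a$-invariant and principality, using the same degree-comparison step you use for (a)$\Rightarrow$(b). Both arguments ultimately rest on the identification of the height-one monomial primes from \cite[Theorem 1.10]{HMQ} and on Proposition~\ref{Prop:a-inv}.
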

\begin{proof}
Let $C_1,\ldots,C_r$ be the minimal vertex covers of the graph $G^c$. 
As was shown in the proof of Proposition~\ref{Prop:a-inv}, $C_i=[n]\setminus B_i$ for all $i$, where $B_1,\ldots,B_r$ are the maximal cliques of $G$. The support form of $P_{C_i}$ is $f_i(x)=-\sum_{j\in B_i}x_j+x_{n+1}=0$ for $i=1,\dots,r$.
Now, by \cite[Theorem 4.3]{FHS}, $R_\Gamma$ is Gorenstein if and only if there exists an integer $a$ such that $1+|B_i|=a$ for all $i=1,\dots,r$. This implies that the statements (a) and (b) are equivalent. Since any minimal vertex cover of $G^c$ is of the form $[n]\setminus B_i$, we see that (b) is further equivalent to (c).

Finally, suppose that $R_\Gamma$ is Gorenstein. Then, the assertion about the canonical module follows from Proposition \ref{Prop:a-inv} and the fact that $\omega_{R_\Gamma}$ is a principal ideal generated in degree $-a(R_\Gamma)=\omega(G)+1$.
\end{proof}

We apply these results to a 1-dimensional unit-interval simplicial complex $\Delta$. Let $G_\Delta$ be the graph whose edges are the facets of $\Delta$. Since $G_\Delta$ is a proper interval graph, it is perfect. Hence, as a special case of Theorem \ref{Thm:IndG-perfect} we have

\begin{Corollary}\label{a-invOneDim}
Let $\Delta$ be a 1-dimensional unit-interval simplicial complex, and let $\Gamma=\Ind(\Delta)$. Then the $a$-invariant of $R_\Gamma$ is equal to
$$
-\max\{|B|+1: B \textrm { is a maximal clique of } \Delta\}.
$$
Moreover, $R_\Gamma$ is Gorenstein if and only if $G_\Delta^c$ is unmixed, and in this case
$$
\omega_{R_\Gamma}=(x_1\cdots x_nt^{\omega(G)+1}).
$$
\end{Corollary}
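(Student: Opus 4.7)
The plan is to reduce this statement directly to Proposition~\ref{Prop:a-inv} and Theorem~\ref{Thm:IndG-perfect} by observing that a $1$-dimensional unit-interval simplicial complex is, up to reading facets as edges, a proper interval graph.

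First, I would set $G=G_\Delta$, the graph whose vertex set is $[n]$ and whose edges are the facets of $\Delta$. Since $\dim(\Delta)=1$, a subset $D\subseteq[n]$ is an independent set of $\Delta$ (in the sense of the excerpt) precisely when $D$ contains no edge of $G_\Delta$, i.e., when $D$ is an independent set of the graph $G$. Hence $\Gamma=\Ind(\Delta)=\Ind(G)$. Next, I would invoke the well-known fact that every proper interval graph is perfect; since $\Delta$ is a unit-interval simplicial complex, $G$ is a proper interval graph by construction, and so $G$ is perfect. Thus Proposition~\ref{Prop:a-inv} and Theorem~\ref{Thm:IndG-perfect} apply to $\Gamma$.

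The only remaining point is the translation $\omega(G)=\max\{|B|:B\textrm{ is a maximal clique of }\Delta\}$. This is immediate from the definitions: a clique of $\Delta$ (a subset all of whose $2$-subsets are facets of $\Delta$) is exactly a clique of $G_\Delta$, and the clique number of $G$ is the maximum size of such a clique. Combining this with Proposition~\ref{Prop:a-inv} yields the formula
\[
a(R_\Gamma)\;=\;-(\omega(G)+1)\;=\;-\max\{|B|+1:B\textrm{ is a maximal clique of }\Delta\}.
\]

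For the second part, Theorem~\ref{Thm:IndG-perfect} gives that $R_\Gamma$ is Gorenstein if and only if $G^c=G_\Delta^c$ is unmixed, and in that case $\omega_{R_\Gamma}=(x_1\cdots x_n t^{\omega(G)+1})$, finishing the proof. There is no real obstacle here: since $d=2$ and everything in Section~\ref{sec:2} is developed for perfect graphs, the main point is just verifying that the hypotheses of the two earlier results are in force, which is routine once one makes the identification $\Ind(\Delta)=\Ind(G_\Delta)$ with $G_\Delta$ proper interval (hence perfect).
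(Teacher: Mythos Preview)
Your proposal is correct and matches the paper's own argument essentially verbatim: the paper simply observes that $G_\Delta$ is a proper interval graph, hence perfect, and then reads off the corollary from Proposition~\ref{Prop:a-inv} and Theorem~\ref{Thm:IndG-perfect}. Your additional remark identifying cliques of $\Delta$ with cliques of $G_\Delta$ to translate $\omega(G)$ into $\max\{|B|\}$ is the only extra detail, and it is indeed immediate.
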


\section{The divisor class group of the toric ring of sortable simplicial complexes}\label{sec:3}

Throughout this section $\Gamma$ is a $d$-flag sortable simplicial complex with $d>2$. In other words, $\Gamma=\Ind(\Delta)$, where $\Delta$ is a unit-interval simplicial complex on $[n]$ with $\dim(\Delta)=d-1>1$.
We investigate the divisor class group $\textup{Cl}(R_{\Gamma})$, the canonical module $\omega_{R_{\Gamma}}$, and the Gorenstein property of $R_{\Gamma}$.

By \cite[Theorem 1.1]{HMQ}, the minimal prime ideals of $(t)\subset R_{\Gamma}$ determine
$\textup{Cl}(R_{\Gamma})$. Moreover, from \cite[Proposition 1.4]{HMQ} we know that for any minimal vertex cover $C$ of the graph $G_\Gamma$ whose edge are the facets of $\Gamma^{[1]}$, the ideal $P_{C}=({\bf x}_Ft: F\in \Gamma_{C})$ is a minimal prime ideal of $(t)$, where  $\Gamma_{C}$ is the induced subcomplex of $\Gamma$ on the set $C$.
Since by assumption $d>2$, it follows that $G_\Gamma$ is the complete graph on the vertex set $[n]$. Hence $\{[n]\setminus\{i\}:i\in[n]\}$ is the set of minimal vertex covers of $G_\Gamma$. Let $C_i=[n]\setminus\{i\}$ for all $i$. We denote the prime ideal $P_{C_i}=({\bf x}_Ft: F\in \Gamma_{C_i})$ by $P_i$. Hence, $P_1,\ldots,P_n$ are among the minimal prime ideals of $(t)$. 

The next result presents another family of minimal prime ideals of $(t)$.
Let $B_1,\dots,B_m$ be the maximal cliques of $\Delta$, which are indeed intervals. For each $j\in[m]$, we consider the monomial ideal
$$
L_j\ =\ ({\bf x}_Ft\ :\ F\in\Gamma\ \text{and}\ f_j(p_F)>0),
$$
where $$f_j(x)=-(\sum_{k\in B_j}x_k)+(d-1)x_{n+1}$$ and $p_F=\sum_{i\in F}e_i+e_{n+1}\in\ZZ^{n+1}$. Here, $e_1,\dots,e_{n+1}$ is the canonical basis of $\ZZ^{n+1}$.

\begin{Theorem}\label{Thm:Lj}
    With the notation introduced, $L_j$ is a minimal monomial prime ideal of $(t)\subset R_\Gamma$, for all $j\in[m]$.
\end{Theorem}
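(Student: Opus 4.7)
The plan is to exhibit $L_j$ as the height-one monomial prime ideal associated to a facet of the cone of $R_\Gamma$, in the style of the arguments used for $P_C$ and $Q_j$ in \cite{HMQ}. First I would compute, for any $F\in\Gamma$,
$$f_j(p_F)\ =\ (d-1)-|F\cap B_j|,$$
and note that $|F\cap B_j|\le d-1$, since $F$ is independent in $\Delta$ while every $d$-subset of the clique $B_j$ is a facet of $\Delta$. Hence $f_j$ is non-negative on every generator $p_F$ of the cone $C_\Gamma=\mathbb{R}_{\ge 0}\langle p_F:F\in\Gamma\rangle$, and $f_j(p_\emptyset)=d-1>0$ already yields $t\in L_j$, so $(t)\subseteq L_j$.

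By Corollary~\ref{finally}, $R_\Gamma$ is a normal affine semigroup ring, so its height-one monomial primes correspond bijectively to the facets of $C_\Gamma$; the prime attached to a supporting linear form $f$ of $C_\Gamma$ is precisely $({\bf x}_Ft:f(p_F)>0)$. Thus it suffices to show that
$$\mathcal{Z}_j\ =\ \{p_F:F\in\Gamma,\ |F\cap B_j|=d-1\}$$
spans an $n$-dimensional subspace of $\mathbb{R}^{n+1}$, i.e., that the hyperplane $\{f_j=0\}$ supports a facet of $C_\Gamma$.

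Write $B_j=[a,b]$. Every $(d-1)$-subset $D\subseteq B_j$ lies in $\Gamma$, and using the swaps $D\mapsto(D\setminus\{k\})\cup\{i\}$ for $i,k\in B_j$, the span of the corresponding vectors $p_D$ contains every difference $e_i-e_k$ with $i,k\in B_j$ together with one fixed $p_{D_0}$; this gives $|B_j|$ linearly independent vectors supported on the coordinates indexed by $B_j\cup\{n+1\}$. For each $i\notin B_j$ I would build a face $F_i=D_0\cup\{i\}\in\Gamma$, taking $D_0=[a,a+d-2]$ when $i>b$ and $D_0=[b-d+2,b]$ when $i<a$. If $F_i$ were a facet of $\Delta$, it would lie in a maximal clique, which, being an interval, would have to contain the interval $[a,i]$ (resp.\ $[i,b]$); this strictly contains $B_j$, contradicting its maximality. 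Hence $F_i\in\Gamma$, and because $p_{F_i}=e_i+p_{D_0}$ with $e_i$ outside the previously built span, adjoining the $n-|B_j|$ vectors $p_{F_i}$ yields $n$ linearly independent elements of $\mathcal{Z}_j$, as required.

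With the facet established, $L_j$ is a height-one monomial prime, and since $t\in L_j$ and $(t)$ itself has height one in the Noetherian normal domain $R_\Gamma$, $L_j$ is a minimal prime of $(t)$. The main obstacle is the construction of the auxiliary faces $F_i$ for $i\notin B_j$: it requires both the interval (convexity) property of maximal cliques in a unit-interval complex and the maximality of $B_j$, both indispensable to ensure that $F_i$ contains no facet of $\Delta$.
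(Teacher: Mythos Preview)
Your argument is correct and takes a genuinely different route from the paper. The paper first shows that $f_j$ defines a supporting hyperplane (so $L_j$ is prime), and then establishes \emph{minimality} by element-chasing: given a minimal prime $P$ with $(t)\subseteq P\subseteq L_j$, it proves by induction on $|F|$ that every generator ${\bf x}_Ft$ of $L_j$ already lies in $P$, using ad hoc factorizations $({\bf x}_Ft)({\bf x}_Gt)=({\bf x}_{F_1}t)({\bf x}_{G_1}t)$. You instead show directly that the supporting hyperplane $\{f_j=0\}$ meets the cone in a \emph{facet}, by exhibiting $n$ linearly independent points $p_F$ on it; the height-one conclusion then falls out of the standard facet/height-one-prime correspondence for normal semigroup rings, and minimality over $(t)$ follows from Krull's principal ideal theorem. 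Interestingly, both proofs hinge on the same combinatorial fact---that for $i\notin B_j$ the set $D_0\cup\{i\}$ (with $D_0$ the $(d-1)$-subset at the far end of $B_j$) is independent, forced by maximality of the clique $B_j$---but they exploit it for different purposes (you to enlarge the span, the paper to build a factorization witnessing membership in $P$). Your approach is shorter and more conceptual; the paper's approach has the advantage of not invoking the facet correspondence and of being self-contained at the level of ideal membership. Two small points: the normality of $R_\Gamma$ is stated in the Proposition immediately following Theorem~\ref{Thm:Ind}, not in Corollary~\ref{finally} (which concerns the Rees algebra); and your $D_0$ depends on whether $i<a$ or $i>b$, so when you write $p_{F_i}=e_i+p_{D_0}$ it would be cleaner to note that either choice of $D_0$ already lies in the span $V$ built from the $(d-1)$-subsets of $B_j$.
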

\begin{proof}
    Firstly, we prove that $L_j$ is a prime ideal of $R_\Gamma$. To this end, it is enough to show that the hyperplane $H_j$ defined by $f_j$ is a supporting hyperplane of   $\mathbb{R}_+ A_\Gamma$, where $A_\Gamma$ is the affine semigroup generated by the lattice points $p_F$ with $F\in\Gamma$ and $\mathbb{R}_+ A_\Gamma$ is the smallest cone containing $A_\Gamma$. To do so, let $F\in\Gamma$. Since $B_j$ is a clique of $\Delta$ and $F$ is an independent set of $\Delta$, it follows that $F$ contains at most $d-1$ elements from $B_j$. Hence, $f_j(p_F)\ge0$, and $f_j(p_F)=0$ if and only if $|F\cap B_j|=d-1$. This shows that $H_j\cap \mathbb{R}_+ A_\Gamma\ne\emptyset$ and $f_j(p_F)\ge0$ for all $F\in\Gamma$. Hence, $H_j$ is a supporting hyperplane of $\mathbb{R}_+ A_\Gamma$ and $L_j$ is a prime ideal containing $t$.

    Now, we show that $L_j$ is a minimal prime ideal of $(t)$. Let $P$ be a minimal prime ideal of $(t)$ such that $(t)\subseteq P\subseteq L_j$. We will prove that $P=L_j$. We proceed by induction on $i>0$ to show that ${\bf x}_Ft\in P$ for all $F\in\Gamma$ with $|F|=i$ such that $f_j(p_F)>0$.

    Let $i=1$. Notice that for any $F\in\Gamma$ with $|F|=1$, we have $f_j(p_F)>0$. So $x_kt\in L_j$ for all $k\in [n]$. By \cite[Lemma 1.2]{HMQ}, the set $C=\{k\in[n]:x_kt\in P\}$ is a vertex cover of $G_\Gamma$. Since $G_\Gamma$ is the complete graph, either $C=[n]$ or $C=[n]\setminus\{h\}$ for some $h\in[n]$. 
    Hence, we must show that $C=[n]$. Suppose by contradiction this is not the case. Then $C=[n]\setminus\{h\}$ and $x_ht\notin P$. Since $|B_j|\ge d$, we can find a $(d-1)$-subset of $B_j$, call it $F$, which does not contain $h$. Then $F\in\Gamma$ and $f_j(p_F)=0$. Hence ${\bf x}_Ft\notin L_j$, and so ${\bf x}_Ft\notin P$, too. Let $q\in F$. Then $F_1=(F\setminus\{q\})\cup\{h\}\in\Gamma$ because $|F_1|=d-1$. Therefore, $(x_ht)({\bf x}_Ft)=(x_qt)({\bf x}_{F_1}t)\in P$ because $q\in C$. This is a contradiction, because $x_ht,{\bf x}_Ft\notin P$. This shows that indeed $C=[n]$.

    Suppose now $i>1$. Let $F\in\Gamma$ be such that $|F|=i$ and $f_j(p_F)>0$. Then $|F\cap B_j|\le d-2$. We must show that ${\bf x}_Ft\in P$. We distinguish two cases.

    \textsc{Case 1}. Suppose that $F\subseteq B_j$. We show that ${\bf x}_Ft\in P$.  
    
    First suppose that there exists a subset $G\subset B_j$ of size $i$ with $G\neq F$ such that ${\bf x}_Gt\notin P$. Then, we can find $h\in F\setminus G$. Notice that $i=|F|=|F\cap B_j|\le d-2$. Hence $G_1=G\cup\{h\}\in\Gamma$ because $|G_1|=i+1\le d-1$ and so $G_1$ is an independent set of $\Delta$. Moreover, $F_1=F\setminus\{h\}\in\Gamma$ too, and $f_j(p_{F_1})>f_j(p_F)>0$. By our induction hypothesis, ${\bf x}_{F_1}t\in P$. Then, $({\bf x}_Ft)({\bf x}_Gt)=({\bf x}_{F_1}t)({\bf x}_{G_1}t)\in P$ because ${\bf x}_{F_1}t\in P$. Since ${\bf x}_Gt\notin P$, we conclude that ${\bf x}_Ft\in P$, as desired.

    Suppose now that for all subsets $H\subset B_j$ of size $i$ different from $F$ we have ${\bf x}_{H}t\in P$. Since $|B_j\setminus F|\ge d-i$, we can find elements $h_1,\dots,h_{d-i}$  in the set $B_j\setminus F$. Moreover, since $i>1$, we have $d-i\le d-2$. Therefore, we can find a subset $G\subset B_j$ of size $d-1$ that contains $h_1,\dots,h_{d-i}$. Then $G\in\Gamma$ and since $f_j(p_G)=0$, we have ${\bf x}_Gt\notin L_j$ and so ${\bf x}_Gt\notin P$ too. Set $F_1=F\cup\{h_1,\dots,h_{d-i-1}\}$ and $G_1=G\setminus\{h_1,\dots,h_{d-i-1}\}$. Then $|F_1|=d-1$ and so $F_1\in\Gamma$. Moreover, $|G_1|=i$ and $G_1\ne F$ because $h_{d-i}\in G_1\setminus F$. Therefore, ${\bf x}_{G_1}t\in P$ by our assumption. Hence, we have $({\bf x}_Ft)({\bf x}_Gt)=({\bf x}_{F_1}t)({\bf x}_{G_1}t)\in P$. Since $ {\bf x}_Gt\notin P$, we conclude that ${\bf x}_Ft\in P$, and this finishes the proof of this case.

    \textsc{Case 2.} Suppose now that $F\cap([n]\setminus B_j)\ne\emptyset$, and let $h\in F\cap([n]\setminus B_j)$. We define a subset $G$ of $B_j$ as follows,
    $$
    G\ =\ \begin{cases}
        [\max B_j-(d-2),\,\max B_j]&\textup{if}\ h<\min B_j,\\
        \hfil[\min B_j,\,\min B_j+(d-2)]&\textup{if}\ h>\max B_j.
    \end{cases}
    $$
    Notice that $|G|=d-1$ and $G\subset B_j$. Therefore, $f_j(p_G)=0$ and so ${\bf x}_Gt\notin L_j$. Thus ${\bf x}_Gt\notin P$. It is easily seen that $G_1=G\cup\{h\}$ is again an independent set of $\Delta$. Indeed, $G_1$ does not contain any $d$-subset of any of the intervals $B_1,\ldots,B_m$. Let $F_1=F\setminus\{h\}$. By induction on $i$, ${\bf x}_{F_1}t\in P$. Hence $({\bf x}_Ft)({\bf x}_Gt)=({\bf x}_{F_1}t)({\bf x}_{G_1}t)\in P$. Since ${\bf x}_Gt\notin P$, it follows that ${\bf x}_Ft\in P$, and this concludes the proof.
\end{proof}

\begin{Corollary}\label{Cor:PGamma}
    Let $\Delta$ be a unit-interval simplicial complex with $\dim(\Delta)>1$,  let $\Gamma=\Ind(\Delta)$, and let $\mathcal{P}_\Gamma$ be the set of the height one monomial prime ideals of $R_\Gamma$ containing $t$. With the notation introduced before, we have
    \begin{equation}\label{eq:inclusionPGamma}
        \{P_i:i=1,\dots,n\}\cup\{L_j:j=1,\dots,m\}\ \subseteq\ \mathcal{P}_\Gamma.
    \end{equation}
\end{Corollary}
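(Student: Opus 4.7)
The plan is to derive the stated inclusion as a direct packaging of Theorem~\ref{Thm:Lj}, the earlier observation about the $P_i$, and Krull's principal ideal theorem. The key structural input will be that $R_\Gamma$ is a normal domain (established by the first proposition of Section~\ref{sec:1}); in particular $t$, being a nonzero element of a domain, is a nonzerodivisor. By the Hauptidealsatz, every minimal prime over the principal ideal $(t)\subset R_\Gamma$ has height exactly one. Consequently, each inclusion in \eqref{eq:inclusionPGamma} will reduce to checking just two things for the ideal in question: that it is a monomial ideal, and that it is a minimal prime of $(t)$ (the containment of $t$ then being automatic).

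For the family $\{P_i : i \in [n]\}$, I would simply invoke the setup preceding Theorem~\ref{Thm:Lj}: since $d>2$, the graph $G_\Gamma$ is the complete graph on $[n]$, whose minimal vertex covers are precisely the sets $C_i=[n]\setminus\{i\}$. By \cite[Proposition 1.4]{HMQ}, each $P_i = P_{C_i}$ is a minimal prime of $(t)$, and by construction $P_i$ is generated by monomials of the form ${\bf x}_F t$, hence is a monomial ideal. Combined with the height observation above, this gives $P_i \in \mathcal{P}_\Gamma$.

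For the family $\{L_j : j \in [m]\}$, Theorem~\ref{Thm:Lj} has already done the substantive work: $L_j$ is defined in monomial form, is shown there to be a prime ideal, and is shown there to be a minimal prime of $(t)$. Applying the Hauptidealsatz yields $\height L_j = 1$, so $L_j \in \mathcal{P}_\Gamma$ as well, completing the verification of \eqref{eq:inclusionPGamma}.

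There is no real obstacle to overcome here: the corollary is essentially a reformulation of Theorem~\ref{Thm:Lj} (together with the known description of the $P_i$) in the language of height one monomial prime ideals containing $t$. The only point worth stating explicitly is the routine passage from \emph{minimal prime of $(t)$} to \emph{height one prime}, which is Krull's principal ideal theorem applied in the normal domain $R_\Gamma$. Note that the corollary deliberately claims only an inclusion, not an equality; the question of whether these two families exhaust $\mathcal{P}_\Gamma$ is precisely the conjecture motivating the rest of Section~\ref{sec:3}.
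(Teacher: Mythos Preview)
Your proposal is correct and matches the paper's approach: the corollary is stated without proof in the paper, as it is an immediate repackaging of Theorem~\ref{Thm:Lj} together with the discussion of the $P_i$ preceding it, plus the routine observation (via Krull's principal ideal theorem in the domain $R_\Gamma$) that minimal primes of the principal ideal $(t)$ have height one. Your explicit justification of this last step is the only addition, and it is a welcome clarification rather than a deviation.
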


Based on some computational evidence, we expect that the inclusion (\ref{eq:inclusionPGamma}) is in fact an equality, as stated in
\begin{Conjecture}\label{Conj:PGamma}
    Let $\Delta$ be a unit-interval simplicial complex with $\dim(\Delta)>1$, and let $\Gamma=\Ind(\Delta)$. The set of height one monomial prime ideals of $R_\Gamma$ containing $t$ is
    $$
    \mathcal{P}_\Gamma\ =\ \{P_i:i=1,\dots,n\}\cup\{L_j:j=1,\dots,m\}.
    $$
\end{Conjecture}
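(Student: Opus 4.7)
The plan is to translate the conjecture into polytope geometry and to invoke the total unimodularity of interval matrices.

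Since by Corollary~\ref{finally} $R_\Gamma$ is a normal affine semigroup ring, its height one monomial prime ideals correspond bijectively to the facets of the cone $\sigma=\mathbb{R}_+\{p_F:F\in\Gamma\}\subseteq\mathbb{R}^{n+1}$. Slicing by $x_{n+1}=1$ identifies these with the facets of the polytope $P_\Gamma=\textup{conv}\{{\bf x}_F:F\in\Gamma\}\subseteq\mathbb{R}^n$, and the primes containing $t$ correspond to the facets of $P_\Gamma$ not passing through the vertex ${\bf x}_\emptyset=0$. The conjecture thus reduces to showing that these ``nonzero-constant'' facets of $P_\Gamma$ are exactly $\{x_i=1\}_{i=1}^n$ together with $\{\sum_{k\in B_j}x_k=d-1\}_{j=1}^m$.

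To identify $P_\Gamma$, I would introduce the auxiliary polytope
\[
Q\;=\;\Bigl\{x\in[0,1]^n:\ \sum_{k\in B_j}x_k\le d-1\ \text{for all }j\in[m]\Bigr\}.
\]
The inclusion $P_\Gamma\subseteq Q$ is immediate since $|F\cap B_j|\le d-1$ for every $F\in\Ind(\Delta)$. For the reverse inclusion I would show that $Q$ is integral. The constraint matrix of $Q$ has rows $\pm e_i^{\top}$ (from $x_i\in[0,1]$) and ${\bf 1}_{B_j}^{\top}$ (the clique inequalities); because each $B_j$ is an interval by the unit-interval hypothesis, every row has the consecutive-ones property (a singleton for the $\pm e_i^{\top}$ rows, a contiguous block for the clique rows), which is a well-known sufficient condition for total unimodularity. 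Combined with the integer right-hand-side $(1,0,d-1)$, this forces every vertex of $Q$ to be $\{0,1\}$-valued. A $\{0,1\}$-vertex ${\bf x}_F$ of $Q$ satisfies $|F\cap B_j|\le d-1$ for all $j$; since the facets of $\Delta$ are precisely the $d$-subsets of the maximal cliques $B_j$, this is equivalent to $F\in\Gamma$. Hence $Q$ and $P_\Gamma$ share the same vertex set, giving $P_\Gamma=Q$.

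It remains to enumerate the facets of $P_\Gamma=Q$. They lie among the defining inequalities $\{x_i\ge 0\}$, $\{x_i\le 1\}$, and $\{\sum_{k\in B_j}x_k\le d-1\}$. The coordinate lower facets pass through the origin and correspond to the primes $Q_i$ of \cite[Proposition~1.9]{HMQ}, which do not contain $t$. The inequality $x_i\le 1$ lifts to the cone form $x_{n+1}-x_i$, the support form of $P_i$, and $\sum_{k\in B_j}x_k\le d-1$ lifts to $f_j=(d-1)x_{n+1}-\sum_{k\in B_j}x_k$, the support form of $L_j$ from Theorem~\ref{Thm:Lj}. Since Corollary~\ref{Cor:PGamma} already certifies that every $P_i$ and every $L_j$ is a bona fide height one prime, each of the $n+m$ corresponding inequalities is irredundant and defines a distinct facet of $P_\Gamma$ not through the origin. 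No other such facets exist, so $\mathcal{P}_\Gamma=\{P_i:i\in[n]\}\cup\{L_j:j\in[m]\}$.

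The main obstacle is the total unimodularity of the constraint matrix of $Q$, which hinges on the $B_j$'s being intervals -- a direct consequence of the unit-interval structure of $\Delta$. For a self-contained verification one could apply Ghouila-Houri's criterion: order the clique rows by their left endpoint and alternate-color them, then assign each $\pm e_i^{\top}$ row the color opposite to the first clique row covering column $i$, and check the column-wise signed-sum balance. This mirrors the classical argument in the $d=2$ case of Section~\ref{sec:2}, where the same integrality is the well-known statement that the stable set polytope of a perfect (in particular, proper interval) graph coincides with its clique-polytope.
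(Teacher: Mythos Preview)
Your argument is sound and actually proves the full statement, whereas the paper leaves it as a conjecture and only establishes the special case in Proposition~\ref{disoint} where the maximal cliques $B_1,\dots,B_m$ partition $[n]$. The paper's partial proof proceeds by a bare-hands induction on the $t$-degree $k$: given a lattice point $(a_1,\dots,a_n,k)$ satisfying all the inequalities, one peels off a factor ${\bf x}_Ft$ by carefully choosing, inside each $B_j$ separately, enough indices to keep the residual point inside the half-spaces. This works cleanly only because the disjointness of the $B_j$ decouples the constraints; the authors say explicitly that they could not push it further. Your route is genuinely different: you recognise $P_\Gamma$ as the polytope $Q=\{x\in[0,1]^n:\sum_{k\in B_j}x_k\le d-1\}$ by observing that the constraint matrix is an interval matrix (consecutive-ones in rows, since each maximal clique of a unit-interval complex is an interval), hence totally unimodular, hence $Q$ is integral with lattice points exactly $\{{\bf x}_F:F\in\Gamma\}$. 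Once $P_\Gamma=Q$, the facet list is forced to lie among the defining inequalities, and Corollary~\ref{Cor:PGamma} certifies that all of $\{x_i\le1\}$ and $\{\sum_{k\in B_j}x_k\le d-1\}$ are indeed facets, finishing the proof. This buys you the general case and, via Propositions~\ref{Cor:PGamma1} and~\ref{bound-a}, the full Gorenstein characterisation and $a$-invariant formula that the paper makes conditional on the conjecture.

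Two small points. First, the normality of $R_\Gamma$ is established in the (unnumbered) Proposition following Theorem~\ref{Thm:Ind}, not in Corollary~\ref{finally}, which concerns the Rees algebra $\mathcal{R}(I(\Gamma^{[t]}))$; adjust the citation. Second, your closing Ghouila--Houri sketch is unnecessary and somewhat imprecise as written; it is cleaner simply to invoke the classical fact that $0$--$1$ matrices with the consecutive-ones property are totally unimodular (and then Hoffman--Kruskal for the integrality of $Q$), and leave it at that.
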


At present, we have not been able to establish this conjecture in full generality. However, we could prove it in the following situation.
\begin{Proposition}\label{disoint}
    Let $\Delta$ be a unit-interval simplicial complex with $\dim(\Delta)>1$, and let $\Gamma=\Ind(\Delta)$. Suppose that the maximal cliques of $\Delta$ form a partition of $V(\Delta)$. Then Conjecture \ref{Conj:PGamma} holds.
\end{Proposition}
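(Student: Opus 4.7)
The plan is to classify all facets of the rational cone $\mathcal{C} = \mathbb{R}_+ A_\Gamma \subseteq \mathbb{R}^{n+1}$. Since $R_\Gamma$ is a normal affine semigroup ring, its height one monomial prime ideals are in natural bijection with the facets of $\mathcal{C}$, and such a prime contains $t$ precisely when the outward-pointing linear form $\sigma$ defining the facet satisfies $\sigma(e_{n+1})>0$. By Corollary~\ref{Cor:PGamma}, it therefore suffices to prove that every facet whose normal is positive on $e_{n+1}$ is already accounted for by some $P_i$ or $L_j$.

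Under the partition hypothesis, $F \in \Gamma=\Ind(\Delta)$ if and only if $|F \cap B_j| \leq d-1$ for every $j \in [m]$. I would first establish the cone identity
\[
\mathcal{C}\ =\ \big\{x \in \mathbb{R}^{n+1} : x_i \geq 0,\ x_{n+1}-x_i \geq 0\ \forall i \in [n],\ (d-1)x_{n+1} - \textstyle\sum_{k \in B_j} x_k \geq 0\ \forall j \in [m]\big\}.
\]
The inclusion $\subseteq$ is immediate from the defining inequalities of $\Gamma$. For $\supseteq$ I would exploit the partition to decouple the problem into $m$ independent local subproblems. The key local fact is a vertex analysis of the \emph{$(d-1)$-truncated hypercube}: for each $B_j$, the polytope $\{x \in \mathbb{R}^{B_j} : 0 \leq x_i \leq 1,\ \sum_{i \in B_j} x_i \leq d-1\}$ is the convex hull of the indicator vectors $\chi_F$ with $F \subseteq B_j$ and $|F| \leq d-1$, since at any vertex all coordinates are forced into $\{0,1\}$. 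Given $(a,b)$ in the right-hand polyhedron with $b>0$, after normalizing $b=1$ the local fact yields convex decompositions $(a_i)_{i \in B_j} = \sum_{F^{(j)}} c^{(j)}_{F^{(j)}} \chi_{F^{(j)}}$ with $\sum_{F^{(j)}} c^{(j)}_{F^{(j)}} = 1$ on each clique. Because the $B_j$ are pairwise disjoint, for any tuple $(F^{(1)},\dots,F^{(m)})$ the disjoint union $H = \bigsqcup_j F^{(j)}$ lies in $\Gamma$; the product coefficients $c_H = \prod_j c^{(j)}_{F^{(j)}}$ then satisfy $\sum_H c_H = 1$ and $\sum_{H \ni i} c_H = a_i$ by marginalization, realizing $(a,1)=\sum_H c_H p_H \in \mathcal{C}$. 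The case $b=0$ forces $a=0$, and scaling back recovers arbitrary $b>0$.

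Next I would check that each of the inequalities above is irredundant by exhibiting $n$ linearly independent lattice points $p_F$ on the corresponding hyperplane: on $x_i = 0$ take $p_\emptyset$ and the $p_{\{k\}}$ for $k \neq i$; on $x_{n+1}-x_i = 0$ take $p_{\{i\}}$ and the $p_{\{i,k\}}$ for $k \neq i$; on $(d-1)x_{n+1}-\sum_{k \in B_j} x_k = 0$, fix a $(d-1)$-subset $F_0 \subseteq B_j$ and take $p_{F_0}$, the $p_{F_0 \cup \{k\}}$ for $k \notin B_j$, together with the one-element swaps $p_{(F_0 \setminus \{q\}) \cup \{r\}}$ with $q \in F_0$ and $r \in B_j \setminus F_0$. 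The facet $x_i=0$ has normal vanishing on $e_{n+1}$ and yields a prime not containing $t$; the facet $x_{n+1}-x_i=0$ has associated prime $(\mathbf{x}_F t : F \in \Gamma,\ i \notin F) = P_i$; and the facet $(d-1)x_{n+1}-\sum_{k \in B_j} x_k = 0$ has associated prime $(\mathbf{x}_F t : F \in \Gamma,\ f_j(p_F) > 0) = L_j$. This gives the reverse inclusion in \eqref{eq:inclusionPGamma}.

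The principal obstacle is the reverse inclusion $\supseteq$ in the cone identity. The coupling step depends crucially on the partition hypothesis: were two cliques $B_j$ and $B_{j'}$ to share a vertex $i$, the marginals of the local decompositions on $B_j$ and $B_{j'}$ would both have to match $a_i$, and these constraints cannot in general be reconciled by a single product coupling. Precisely this is the barrier to the general case of Conjecture~\ref{Conj:PGamma}.
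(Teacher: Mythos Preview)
Your proof is correct and follows a genuinely different route from the paper's.

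The paper argues at the level of the semigroup: it shows that the lattice-point set $A=(\bigcap_i H_{Q_i}^{(+)})\cap(\bigcap_i H_{P_i}^{(+)})\cap(\bigcap_j H_{L_j}^{(+)})\cap\ZZ^{n+1}$ coincides with $A_\Gamma$ by induction on the $t$-exponent $k$. Given $(a_1,\dots,a_n,k)\in A$, it constructs subsets $F_j\subseteq B_j$ (via a case distinction on whether $d_j=\sum_{i\in B_j}a_i$ is close to the bound $(d-1)k$, ensuring both $0\le b_i\le k-1$ and $\sum_{i\in B_j}b_i\le(d-1)(k-1)$ for the residual) so that $(a_1,\dots,a_n,k)-p_F$ with $F=\bigsqcup_j F_j$ again lies in $A$; the partition guarantees $F\in\Gamma$. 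Irredundancy is not rechecked, since Corollary~\ref{Cor:PGamma} already certifies that each $P_i$ and $L_j$ is a minimal prime of $(t)$.

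You instead work directly with the real cone and exploit a clean polytopal picture: under the partition hypothesis the slice $\{x_{n+1}=1\}$ of the right-hand polyhedron is the product $\prod_j\{x\in[0,1]^{B_j}:\sum_i x_i\le d-1\}$ of integral polytopes, so any point decomposes as a convex combination of the $p_H$ via the product coupling. Your argument makes the role of the partition hypothesis maximally transparent---the coupling step is precisely what fails for overlapping cliques, as you observe---and the explicit irredundancy check, while strictly unnecessary given Corollary~\ref{Cor:PGamma}, is a nice self-contained verification. The paper's inductive peeling is by contrast more constructive (it exhibits an explicit factorization of each monomial), and its greedy choice of the $F_j$ may be a more adaptable template for the general conjecture, where one must coordinate choices across overlapping $B_j$.
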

\begin{proof}
    Let $B_1,\dots,B_m$ be the maximal cliques of $\Delta$. Let $A_\Gamma$ be the affine semigroup generated by the lattice points $p_F$ with $F\in\Gamma$. By \cite[Proposition 1.9]{HMQ}, the height one monomial prime ideals not containing $t$ are the ideals $Q_i=({\bf x}_Ft:F\in\Gamma,i\in F)$, for $i=1,\dots,n$. We know from the proofs of Theorem~\ref{Thm:Lj}, and \cite[Theorem 1.3, Proposition 1.9]{HMQ} that 
    \begin{align}
    \label{eq:fQ}f_{Q_i}(x)\ &=\ x_i,&&\textup{for}\ i=1,\dots,n,\\
    \label{eq:fP}f_{P_i}(x)\ &=\ -x_i+x_{n+1},&& \textup{for}\ i=1,\dots,n,\\
    \label{eq:fL}f_{L_j}(x)\ &=\ -(\sum_{i\in B_j}x_i)+(d-1)x_{n+1},&&\textup{for}\ j=1,\dots,m,
    \end{align}
    are the support forms of the prime ideals $Q_i$, $P_i$, for $i=1,\dots,n$, and the ideals $L_j$ for $j=1,\dots,m$, respectively. Let
    $$
    A\ =\ (\bigcap_{i=1}^n H_{Q_i}^{(+)})\cap(\bigcap_{i=1}^n H_{P_i}^{(+)})\cap(\bigcap_{j=1}^m H_{L_j}^{(+)})\cap\ZZ^{n+1},
    $$
    where each $H_{Q_i}^{(+)}$, $H_{P_i}^{(+)}$, $H_{L_j}^{(+)}$ is the half-space defined by the equations $f_{Q_i}(x)\ge0$, $f_{P_i}(x)\ge0$, and $f_{L_j}(x)\ge0$, respectively. By Corollary \ref{Cor:PGamma}, we have the inclusion $A_\Gamma\subseteq A$. Therefore, if we show the opposite inclusion, it will follow that the inclusion (\ref{eq:inclusionPGamma}) is an equality, and so Conjecture \ref{Conj:PGamma} holds.

    To prove that $A\subseteq A_\Gamma$, let $p=(a_1,\dots,a_n,k)\in A$ and $u=x_1^{a_1}\cdots x_n^{a_n}t^k$. It follows from equations (\ref{eq:fQ}), (\ref{eq:fP}) and (\ref{eq:fL}) that $0\le a_i\le k$ for all $i=1,\dots,n$ and $\sum_{i\in B_j}a_i\le (d-1)k$ for all $j=1,\dots,m$, where $d-1=\dim(\Delta)$. Notice that for $k=0$, we have $u=1$ and so $p\in A_\Gamma$. Now, let $k>0$. Proceeding by induction on $k\ge1$, we will show that $u\in R_\Gamma$ and so $p\in A_\Gamma$, as desired.

    For the base case, let $k=1$. Then, $0\le a_i\le 1$ for $i=1,\dots,n$, and so $u={\bf x}_Ft$ for some $F\subset[n]$. Since $\sum_{i\in B_j}a_i\le d-1$ for all $j=1,\dots,m$, we have $|F\cap B_j|\le d-1$. This shows that $F$ is an independent set of $\Gamma$, and so $u\in R_\Gamma$.

    Now, let $k>1$. We claim that $u=({\bf x}_Ft)v$ for some monomials ${\bf x}_Ft\in R_{\Gamma}$ and $v=x_1^{b_1}\cdots x_n^{b_n}t^{k-1}$ such that $q=(b_1,\dots,b_n,k-1)\in A$. Having this claim proved, it follows by induction that $v\in R_\Gamma$ and hence $u\in R_\Gamma$ too, as desired.

    To this end, for each $j$, we set $d_j=\sum_{i\in B_j}a_i$. Now fix $j\in\{1,\dots,m\}$. We will 
    choose an appropriate subset $F_j\subset B_j$ of size at most $d-1$.
    To this aim, let $h_j=\max\{0,d_j-(d-1)(k-1)\}$. If $h_j=0$, then $d_j\le (d-1)(k-1)$ and we put $F_j=\{i\in B_j:a_i=k\}$. Notice that $|F_j|\le d-1$. Otherwise, if $h_j>0$, first we claim that $|\{i\in B_j:a_i>0\}|\ge h_j$. Indeed, if such set has size strictly less than $h_j$, then $d_j<h_jk=[d_j-(d-1)(k-1)]k$. From this equation it would follow that $(d-1)(k-1)k<d_j(k-1)$. Since $k>1$, we would have $d_j>(d-1)k$ which is absurd. Hence $|\{i\in B_j:a_i>0\}|\ge h_j$. Notice that since $d_j\leq (d-1)k$, we have $h_j\leq d-1$. We put $F_j=\{i\in B_j:a_i=k\}\cup G_j$, where $G_j=\emptyset$ if $|F_j|\geq h_j$, or else we choose $G_j\subseteq\{i\in B_j:0<a_i<k\}$ to be a subset of size $h_j-|\{i\in B_j:a_i=k\}|>0$. 
    
    In any case $F_j$ is a subset of $B_j$ with $h_j\le |F_j|\le d-1$. Let $F=F_1\cup\dots\cup F_m$. Since $V(\Delta)=B_1\sqcup\cdots\sqcup B_m$, it follows that $F\in\Gamma$. We can write $u=({\bf x}_Ft)v$, with ${\bf x}_Ft\in R_\Gamma$ and $v=x_1^{b_1}\cdots x_{n}^{b_n}t^{k-1}$. Moreover, by the construction of the sets $F_j$, it follows that $0\le b_i\le k-1$ for all $i$, and $\sum_{i\in B_j}b_i=d_j-|F_j|\le d_j-h_j \le (d-1)(k-1)$ for all $j$. Hence, $(b_1,\dots,b_n,k-1)\in A$. By induction hypothesis on $k$, it follows that $v\in R_\Gamma$. Hence $u\in R_\Gamma$ too, as desired.
\end{proof}

Corollary~\ref{Cor:PGamma} allows us to give a necessary condition for the Gorenstein property of $R_\Gamma$ for any $d$-flag sortable simplicial complex with $d>2$, as follows. 

\begin{Proposition}\label{Cor:PGamma1}
    Let $\Delta$ be a unit-interval simplicial complex, and let $\Gamma=\Ind(\Delta)$.  If $R_\Gamma$ is Gorenstein and $\dim(\Delta)=d-1>1$, then $|B|=2d-3$ for all maximal cliques $B$ of $\Delta$. The converse holds if Conjecture~\ref{Conj:PGamma} holds.
\end{Proposition}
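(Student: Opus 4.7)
The plan is to use the Gorensteinness criterion for normal affine semigroup rings employed in the proof of Theorem~\ref{Thm:IndG-perfect}. Since $R_\Gamma$ is a normal Cohen--Macaulay domain by Corollary~\ref{finally}, \cite[Theorem~4.3]{FHS} asserts that $R_\Gamma$ is Gorenstein if and only if there exists a lattice point $p\in \ZZ^{n+1}$ with $f(p)=1$ for every primitive support form $f$ of a height one monomial prime ideal of $R_\Gamma$. I would then match this against the explicit forms
\[
f_{Q_i}(x)=x_i,\qquad f_{P_i}(x)=-x_i+x_{n+1},\qquad f_{L_j}(x)=-\sum_{i\in B_j}x_i+(d-1)x_{n+1}
\]
recorded in the proof of Proposition~\ref{disoint}.

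For necessity, assume $R_\Gamma$ is Gorenstein and let $p=(a_1,\dots,a_n,k)$ be the associated lattice point. By \cite[Proposition~1.9]{HMQ} and Corollary~\ref{Cor:PGamma}, the ideals $Q_i$, $P_i$, $L_j$ are all height one monomial primes of $R_\Gamma$, so the equations
\[
a_i=f_{Q_i}(p)=1,\qquad k-a_i=f_{P_i}(p)=1,\qquad (d-1)k-|B_j|=f_{L_j}(p)=1
\]
force $a_i=1$, $k=2$, and hence $|B_j|=2(d-1)-1=2d-3$ for every $i\in[n]$ and $j\in[m]$, which is the claim.

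For the converse, assume Conjecture~\ref{Conj:PGamma} and $|B_j|=2d-3$ for all $j$. Take $p=(1,\dots,1,2)$. Direct substitution gives $f_{Q_i}(p)=f_{P_i}(p)=f_{L_j}(p)=1$. Under the conjecture, together with \cite[Proposition~1.9]{HMQ}, these forms exhaust the primitive support forms of height one monomial primes of $R_\Gamma$, so the criterion applies and $R_\Gamma$ is Gorenstein.

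The only subtle step is to verify that the candidate $p=(1,\dots,1,2)$ actually belongs to $A_\Gamma$, so that $\mathbf{x}^p=x_1\cdots x_n t^2$ does generate $\omega_{R_\Gamma}$. Since $\emptyset\in\Gamma$ and every singleton $\{i\}\in\Gamma$ (as $\dim(\Delta)=d-1>1$), the points $e_{n+1}$ and $e_i+e_{n+1}$ lie in $A_\Gamma$, whence $\ZZ A_\Gamma=\ZZ^{n+1}$ and $p\in\ZZ A_\Gamma$; since $f(p)>0$ on every facet form, $p$ lies in the relative interior of the cone $\mathbb{R}_+A_\Gamma$, and normality of $R_\Gamma$ then yields $p\in A_\Gamma$, completing the argument.
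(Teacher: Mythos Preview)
Your proof is correct and follows essentially the same route as the paper: both invoke \cite[Theorem~4.3]{FHS} against the explicit support forms of $P_i$ and $L_j$ to force $|B_j|=2d-3$, and both use Conjecture~\ref{Conj:PGamma} for the converse. Two minor remarks: your citation of Corollary~\ref{finally} for the normality of $R_\Gamma$ is misplaced (that corollary concerns the Rees algebra $\mathcal{R}(I(\Gamma^{[t]}))$; the relevant statement is the proposition immediately following Theorem~\ref{Thm:Ind}), and your final paragraph verifying $p\in A_\Gamma$ is not needed for the Gorenstein claim itself, since the criterion only requires $p\in\ZZ A_\Gamma$ with $f(p)=1$ on every facet form.
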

\begin{proof}
      Let $\mathcal{P}_\Gamma=\{\p_1,\dots,\p_r\}$, and let $f_i(x)=\sum_{j=1}^{n+1}c_{i,j}x_j$ be the support form associated with $\p_i$, for $i=1,\dots,r$. By \cite[Theorem 4.3]{FHS}, $R_\Gamma$ is Gorenstein if and only if there exists $a\in\ZZ$ such that $1-\sum_{j=1}^nc_{i,j}=ac_{i,n+1}$ for all $i=1,\dots,r$. By Corollary \ref{Cor:PGamma}, we know that the inclusion (\ref{eq:inclusionPGamma}) holds. Recalling that the support form of $P_i\in\mathcal{P}_\Gamma$ is $f(x)=-x_i+x_{n+1}$ we obtain that $a=2$. Whereas, letting $B_1,\dots,B_m$ be the maximal cliques of $\Delta$, from the fact that $L_j\in\mathcal{P}_\Gamma$ for all $j=1,\dots,m$, we deduce that $1+|B_j|=a(d-1)$ for all $j$. Therefore, we obtain that $|B|=2d-3$ for all maximal cliques $B$ of $\Delta$.

    Finally, if the inclusion (\ref{eq:inclusionPGamma}) is an equality, then the previous argument shows that $R_\Gamma$ is Gorenstein if and only if $|B|=2d-3$, for all maximal cliques $B$ of $\Delta$.
\end{proof}

For a pure simplicial complex $\Delta$, the \textit{clique number} of $\Delta$, denoted by $\omega(\Delta)$, is defined as the largest size of a clique of $\Delta$.
Analogous to Corollary \ref{a-invOneDim} we have

\begin{Proposition}\label{bound-a}
Let $\Delta$ be a unit-interval simplicial complex with $\dim(\Delta)=d-1>1$, and let $\Gamma=\Ind(\Delta)$. Then
\begin{equation}\label{eq:a-inv-bound}
    a(R_{\Gamma})\ \leq\ \begin{cases}
\,-\lceil\frac{\omega(\Delta)}{d-1}\rceil&\textit{if}\ d-1\ \textit{does not divide}\ \omega(\Delta),\\
\,-\lceil\frac{\omega(\Delta)}{d-1}\rceil-1&\textit{if}\ d-1\ \textit{divides}\ \omega(\Delta).
\end{cases}
\end{equation}
If Conjecture~\ref{Conj:PGamma} holds, then \textup{(\ref{eq:a-inv-bound})} becomes an equality. 
\end{Proposition}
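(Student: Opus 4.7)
The plan is to mimic the approach of Proposition~\ref{Prop:a-inv}: exploit the Danilov-Stanley description of the canonical module of the normal toric ring $R_{\Gamma}$, guaranteed by Corollary~\ref{finally}. Namely, $\omega_{R_{\Gamma}}$ is the monomial ideal generated by those $u=x_1^{a_1}\cdots x_n^{a_n}t^k$ whose lattice point $p_u=(a_1,\dots,a_n,k)$ lies in the relative interior of $\mathbb{R}_+A_\Gamma$, equivalently satisfies $f_P(p_u)>0$ strictly for every height one prime $P$ of $R_\Gamma$. The $a$-invariant is then $-k_{\min}$, where $k_{\min}$ is the least $k$ for which such an interior lattice point exists. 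The support forms already known are $f_{Q_i}(x)=x_i$ from \cite[Proposition 1.9]{HMQ}, and $f_{P_i}(x)=-x_i+x_{n+1}$, $f_{L_j}(x)=-\sum_{i\in B_j}x_i+(d-1)x_{n+1}$ from Corollary~\ref{Cor:PGamma}.

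For the inequality in (\ref{eq:a-inv-bound}), I would note that any $u\in\omega_{R_\Gamma}$ must in particular satisfy the strict positivity coming from the primes we already know (even if others might exist, partial knowledge only gives a valid upper bound on $a(R_\Gamma)$, since fewer constraints define a superset). From $f_{Q_i}(p_u)>0$ one gets $a_i\geq 1$ for every $i$. Choosing a maximum clique $B$ of $\Delta$ with $|B|=\omega(\Delta)$, the $L$-inequality yields
$$
(d-1)k\ >\ \sum_{i\in B}a_i\ \geq\ |B|\ =\ \omega(\Delta).
$$
If $d-1\nmid\omega(\Delta)$, the smallest integer $k$ satisfying this is $\lceil\omega(\Delta)/(d-1)\rceil$, while if $d-1\mid\omega(\Delta)$, strictness forces $k\geq\omega(\Delta)/(d-1)+1$. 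This gives the desired upper bound on $a(R_{\Gamma})$.

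For the equality under Conjecture~\ref{Conj:PGamma}, the three families $\{Q_i\},\{P_i\},\{L_j\}$ are all the height one primes, so the strict inequalities $a_i\geq 1$, $a_i\leq k-1$, $\sum_{i\in B_j}a_i\leq (d-1)k-1$ characterize interior lattice points exactly. It then suffices to exhibit a single such point with $(n+1)$-coordinate equal to the bound. I would try $a_i=1$ for every $i$ and $k$ equal to the integer appearing on the right-hand side of (\ref{eq:a-inv-bound}) without the minus sign. To verify $a_i=1\leq k-1$ one needs $k\geq 2$, which follows from $\omega(\Delta)\geq d$ (every facet of $\Delta$ is a clique, since its only $d$-subset is itself), so $\omega(\Delta)/(d-1)\geq d/(d-1)>1$. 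The $L_j$-constraint becomes $|B_j|\leq (d-1)k-1$, which is routine in both divisibility cases by the choice of $k$. Thus $u=x_1\cdots x_nt^k\in\omega_{R_\Gamma}$ realizes the bound.

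The main obstacle is conceptual rather than computational: one must be comfortable with the two-sided use of the Danilov-Stanley correspondence, recognizing that partial knowledge of the height one primes already suffices for the inequality (because removing constraints enlarges the region), while full knowledge, provided by the conjecture, upgrades the inequality to an equality by exhibiting a specific interior lattice point. The numerics for the two divisibility cases require a small amount of care with strict inequalities and rounding, but nothing beyond bookkeeping.
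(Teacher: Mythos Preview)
Your proposal is correct and follows essentially the same route as the paper's proof: both use the Danilov--Stanley description of $\omega_{R_\Gamma}$, the known support forms of $Q_i$, $P_i$, $L_j$ to force $a_i\ge 1$ and $\sum_{i\in B_j}a_i<(d-1)k$, deduce the bound on $k$ in the two divisibility cases, and then exhibit $x_1\cdots x_nt^k$ as an interior lattice point under the conjecture. One minor slip: the normality of $R_\Gamma$ is established in the Proposition following Theorem~\ref{Thm:Ind}, not in Corollary~\ref{finally}, which concerns the Rees algebra $\mathcal{R}(I(\Gamma^{[t]}))$.
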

\begin{proof}
Let $B_1,\dots,B_m$ be the maximal clique intervals of $\Delta$. By Corollary \ref{Cor:PGamma}, we know that the inclusion (\ref{eq:inclusionPGamma}) holds. By \cite[Proposition 1.9]{HMQ}, the height one monomial prime ideals not containing $t$ are the ideals $Q_i=({\bf x}_Ft:F\in\Gamma,i\in F)$, for $i=1,\dots,n$. Therefore, for any monomial $u=x_1^{a_1}\cdots x_n^{a_n}t^k\in\omega_{R_\Gamma}$, we have that $u\in(\bigcap_{i=1}^nQ_i)\cap(\bigcap_{i=1}^nP_i)\cap(\bigcap_{j=1}^mL_j)$. Then we should have $0<a_i<k$ for $i=1,\dots,n$ and $d_j=\sum_{i\in B_j}a_i<(d-1)k$ for $j=1,\dots,m$. Since each $a_i$ is positive, we obtain that $|B_j|\le d_j<(d-1)k$. Hence $k>|B_j|/(d-1)$ for all $j$, and so $k\ge \lceil\omega(\Delta)/(d-1)\rceil$. This implies that $a(R_\Gamma)\le-\lceil\omega(\Delta)/(d-1)\rceil$.

Suppose now, in addition, that $d-1$ divides $\omega(\Delta)$. Then, there exist integers $\ell\in [m]$ and $k>0$ such that $|B_{\ell}|=\omega(\Delta)=k(d-1)$. Notice that $k=\lceil\frac{\omega(\Delta)}{d-1}\rceil$. We claim that $a(R_\Gamma)<-k$. Suppose by contradiction that this is not the case. Then the previous argument shows that  $a(R_\Gamma)=-k$. So we could find a monomial $u=x_1^{a_1}\cdots x_n^{a_n}t^k\in\omega_{R_\Gamma}$ such that $0<a_i<k$ for $i=1,\dots,n$ and $d_j=\sum_{i\in B_j}a_i<(d-1)k$ for $j=1,\dots,m$. However, for $j=\ell$ we have $(d-1)k=|B_{\ell}|\le d_{\ell}<(d-1)k$, which is absurd.

Assume that the inclusion (\ref{eq:inclusionPGamma}) is an equality. We claim that $u=x_1\cdots x_nt^{k}\in\omega_{R_\Gamma}$ with $k=\lceil\frac{\omega(\Delta)}{d-1}\rceil$ if $d-1$ does not divide $\omega(\Delta)$, and $k=\lceil\frac{\omega(\Delta)}{d-1}\rceil+1$, otherwise. This will show that equality holds in (\ref{eq:a-inv-bound}). Since, by assumption (\ref{eq:inclusionPGamma}) is an equality, from the equations (\ref{eq:fQ}), (\ref{eq:fP}) and (\ref{eq:fL}), it follows that a monomial $v=x_1^{b_1}\cdots x_n^{b_n}t^\ell$ belongs to $\omega_{R_\Gamma}$ if and only if $0<b_i<\ell$ and $\sum_{i\in B_j}b_i<(d-1)\ell$ for all $i$ and $j$. Since $d-1>1$, any interval $B_j$ has size at least $d$, therefore $k\ge2$, and since each of the exponents of the variables $x_i$ appearing in $u$ is one, the first set of inequalities is satisfied. Moreover, $\sum_{i\in B_j}a_i=|B_j|<(d-1)k$ for all $j$, by the definition of $k$. Therefore, $u\in\omega_{R_\Gamma}$ as desired.
\end{proof}

By \cite[Proposition 1.9]{HMQ} we know that $\omega_{R_\Gamma}=\p_1\cap\dots\cap\p_r\cap Q_1\cap\dots\cap Q_n$, where $\mathcal{P}_\Gamma=\{\p_1,\dots,\p_r\}$ and $Q_i=({\bf x}_Ft:F\in\Gamma,i\in F)$, for $i=1,\dots,n$, are the height one monomial prime ideals of $R_\Gamma$ not containing $t$. If $(t)$ is a radical ideal, then this simplifies the computation of the canonical module of $R_\Gamma$. Indeed, if $(t)$ is radical, then $(t)=\p_1\cap\dots\cap\p_r$ and so $\omega_{R_\Gamma}=(t)\cap Q_1\cap\dots\cap Q_n$. We will prove that $(t)$ is radical if and only if $\dim(\Delta)=1$. We begin with

\begin{Proposition}\label{Lem:t-radical}
    Let $\Delta$ be a 1-dimensional unit-interval simplicial complex, and let $\Gamma=\Ind(\Delta)$. Then, $(t)\subset R_\Gamma$ is a radical ideal.
\end{Proposition}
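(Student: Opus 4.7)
My plan is to use the Krull domain structure of $R_\Gamma$ together with the support-form computation already carried out in Section~\ref{sec:2}. First, since $\dim(\Delta)=1$, I would view $\Delta$ as a graph, and by Theorem~\ref{Thm:Ind} this graph $G=G_\Delta$ is a proper interval graph, which is perfect; so $\Gamma=\Ind(G)$ with $G$ perfect, and the results of Section~\ref{sec:2} apply. In particular, \cite[Theorem 1.10]{HMQ} identifies the minimal primes of $(t)\subset R_\Gamma$ as exactly the ideals $P_{C_i}$, where $C_i=[n]\setminus B_i$ and $B_1,\ldots,B_r$ are the maximal cliques of $G$.

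Because $R_\Gamma$ is a normal Cohen-Macaulay domain, it is a Krull domain; every localization at a height-one prime is a DVR, and $(t)$ admits a primary decomposition
\[
(t)\ =\ P_{C_1}^{(a_1)}\cap\cdots\cap P_{C_r}^{(a_r)},\qquad a_i\ =\ v_{P_{C_i}}(t).
\]
Consequently $(t)$ is radical if and only if $a_i=1$ for every $i$, and the task reduces to a valuation calculation at each minimal prime.

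The valuation $v_{P_{C_i}}$ is the monomial valuation attached to the facet of the cone $\mathbb{R}_+A_\Gamma$ cut out by the primitive support form $f_i$; on a monomial $u$ with exponent vector $p_u$ it returns $f_i(p_u)$. From the proof of Proposition~\ref{Prop:a-inv}, this form is
\[
f_i(x)\ =\ -\sum_{j\in B_i}x_j+x_{n+1},
\]
and the plan concludes by reading off $v_{P_{C_i}}(t)=f_i(e_{n+1})=1$ for every $i$, forcing each $a_i=1$ and proving that $(t)$ is radical.

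The subtle point I expect to be the main (small) obstacle is verifying that $f_i$ above is the \emph{primitive} support form of the facet, rather than some positive integer multiple of it: only for the primitive form does the identity $v_P(u)=f(p_u)$ hold. This is however immediate in the present situation, since the coefficient of $x_{n+1}$ in $f_i$ equals $1$, so the gcd of the coefficients of $f_i$ is necessarily $1$. No other serious obstacle appears to arise, as the description of the height-one primes over $(t)$ is already available from Section~\ref{sec:2} in the perfect-graph case.
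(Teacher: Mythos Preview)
Your argument is correct and takes a genuinely different route from the paper. The paper proves the equality $(t)=\bigcap_i P_{C_i}$ by a direct combinatorial induction on the $t$-degree $k$ of a monomial $u=x_1^{a_1}\cdots x_n^{a_n}t^k$ lying in the intersection: at each step one constructs an independent set $F\in\Gamma$ meeting every maximal clique $B_i$ for which $\sum_{j\in B_i}a_j=k-1$, so that $u=({\bf x}_Ft)v$ with $v$ again in the intersection and of $t$-degree $k-1$. Your approach instead invokes the Krull-domain/toric machinery: the divisorial valuation at $P_{C_i}$ is computed by the primitive facet normal $f_i$, and since the coefficient of $x_{n+1}$ in $f_i$ equals $1$, one reads off $v_{P_{C_i}}(t)=f_i(e_{n+1})=1$ immediately. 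Your route is considerably shorter and more conceptual, and it dovetails nicely with the argument the paper itself uses for the converse direction in Corollary~\ref{Cor:radical}, where the failure of radicality for $\dim\Delta>1$ is detected precisely by the coefficient of $x_{n+1}$ in the support form of $L_j$ being $d-1>1$. The paper's hands-on approach, on the other hand, is self-contained and avoids appealing to the identification of the DVR valuation with the primitive support form (a standard fact, available for instance in \cite{BG2009}, but not stated in the paper); its inductive ``peeling off'' technique is moreover the same template reused in the proof of Proposition~\ref{disoint}.
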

\begin{proof}
Let $G$ to be the graph with $V(G)=V(\Delta)=[n]$ and $E(G)=\mathcal{F}(\Delta)$. Then $G$ is a proper interval graph. Let $B_1,\ldots,B_m$ be the maximal cliques of $G$ consisting of intervals. We may assume that for $i<j$, $\min B_i<\min B_j$.

Let $G_{\Gamma}$ be the graph whose edge set is $\mathcal{F}(\Gamma^{[1]})$, and let $C_1,\ldots,C_r$ be the minimal vertex covers of $G_{\Gamma}$. Since any proper interval graph is a perfect graph, by \cite[Theorem 1.10]{HMQ}, the minimal prime ideals of $(t)$ are $P_{C_i}=({\bf x}_{F}t:\ F\in \Gamma_{C_i})$ for $1\leq i\leq r$. Here $\Gamma_{C_i}$ is the subcomplex of $\Gamma$ induced on the set $C_i$. Notice that $G_{\Gamma}=G^c$. Moreover, $C_i=[n]\setminus F_i$, where $F_i$ is a maximal independent set of $G_{\Gamma}=G^c$ for $1\leq i\leq r$. Since maximal independent sets of $G^c$ are the maximal cliques of $G$ and any maximal clique of $G$ is an interval $B_i$, after relabeling the $C_i$'s, we obtain $C_i=[n]\setminus B_i$ for any $1\leq i\leq m$ and $r=m$.

We show that $(t)=\bigcap_{i=1}^m P_{C_i}$. Since each $P_{C_i}$ is a monomial ideal of $R_{\Gamma}$, the ideal $\bigcap_{i=1}^m P_{C_i}$ is a monomial ideal. So it is enough to show that any monomial in this intersection belongs to $(t)$. Let $u=x_1^{a_1}\cdots x_n^{a_n}t^k\in\bigcap_{i=1}^m P_{C_i}$.
The hyperplane $f_i(x)=-\sum_{j\notin C_i}x_j+x_{n+1}=0$ is the supporting hyperplane of the prime ideal $P_{C_i}$ for $1\leq i\leq m$, see the proof of \cite[Theorem 1.3]{HMQ}. Replacing $C_i$ by $[n]\setminus B_i$ we get $f_i(x)=-\sum_{j\in B_i}x_j+x_{n+1}$. So for the point $p_u=(a_1,\ldots,a_n,k)$ we have $f_i(p_u)=-\sum_{j\in B_i}a_j+k>0$ for all $i$. Hence, $\sum_{j\in B_i}a_j<k$ for all $i$. By induction on $k$ we show that $u\in (t)$. First assume that $k=1$. Then we have $\sum_{j\in B_i}a_j<1$ for all $i$. So $a_j=0$ for all $1\leq j\leq n$, since each $j$ belongs to some $B_i$. So $u=t\in (t)$.   By induction assume that for any monomial $v=x_1^{b_1}\cdots x_n^{b_n}t^{k-1}\in \bigcap_{i=1}^m P_{C_i}$, we have $v\in (t)$.
We claim that $u=({\bf x}_Ft)v$ for some monomials ${\bf x}_Ft\in R_{\Gamma}$ and $v\in \bigcap_{i=1}^m P_{C_i}$. Having this claim proved, it follows by induction that $v\in (t)$ and hence $u\in (t)$, as desired.

Set $d_i=\sum_{j\in B_i}a_j$. We have $d_i\leq k-1$ for all $i$. If $d_i\leq k-2$ for all $i$, then  $v=x_1^{a_1}\cdots x_n^{a_n}t^{k-1}\in \bigcap_{i=1}^m P_{C_i}$ and $u=tv\in (t)$. Now, assume that there exist intervals $B_i$ such that $d_i=k-1$. Let $i_1$ be the smallest integer such that $d_{i_1}=k-1$. We let $k_1$ be the largest integer in the interval $B_{i_1}$ such that $a_{k_1}>0$.
If for any $j>i_1$ with $k_1\notin B_{j}$ we have $d_{j}<k-1$, then we set $F=\{k_1\}$ and we obtain $u=({\bf x}_Ft)v$ with $v\in \bigcap_{i=1}^m P_{C_i}$, as desired.        
Otherwise, let $i_2>i_1$ be the smallest integer such that $k_1\notin B_{i_2}$ and $d_{i_2}=k-1$. Then $k_1<\min(B_{i_2})$. We let $k_2$ be the largest integer in the interval $B_{i_2}$ such that $a_{k_2}>0$. We claim that $\{k_1,k_2\}\in \Gamma$. If this is not the case, then $k_1,k_2\in B_{\ell}$ for some $\ell$. By the choice of $k_2$, and that $k_1\in B_{\ell}$ with $k_1<\min(B_{i_2})$, we have $\{j: j\in B_{i_2}, a_j>0\}\subseteq B_{\ell}$. Hence
$$
d_{\ell}=\sum_{j\in B_{\ell}}a_j\geq a_{k_1}+\sum_{j\in B_{i_2}}a_j=a_{k_1}+d_{i_2}\geq k,
$$
which is a contradiction.          
Hence, $\{k_1,k_2\}\in \Gamma$. 
Continuing this process, we obtain an independent set $F=\{k_1,\ldots,k_s\}\in \Gamma$ such that for any integer $j$ with $d_{j}=k-1$, $F\cap B_j\neq \emptyset$. Then we may write  $u=({\bf x}_Ft)v$, where $v=u/({\bf x}_Ft)$. By the choice of $F$, it follows that $v\in \bigcap_{i=1}^m P_{C_i}$, and the proof is complete. 
\end{proof}

As a consequence, we obtain
\begin{Corollary}\label{Cor:radical}
    Let $\Delta$ be a unit-interval simplicial complex, and let $\Gamma=\Ind(\Delta)$.  The ideal $(t)\subset R_\Gamma$ is radical if and only if $\dim(\Delta)=1$.
\end{Corollary}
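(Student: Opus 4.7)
The reverse implication (that $\dim(\Delta)=1$ implies $(t)$ is radical) is already established in Proposition \ref{Lem:t-radical}, so the plan focuses on the forward implication, which I will prove by the contrapositive. Assuming $\dim(\Delta)=d-1\ge 2$, I will exhibit an explicit monomial $u\in R_\Gamma$ such that $u\notin(t)$ but $u^{\,d-1}\in(t)$. This places $u$ in $\sqrt{(t)}\setminus(t)$ and shows that $(t)$ is not radical.

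The candidate is straightforward. Since $\Delta$ is non-empty and pure of dimension $d-1\ge 2$, every facet is already a clique of $\Delta$, so it is contained in a maximal clique $B$ of size $|B|\ge d$. Fix any $d$-subset $B'\subseteq B$; by the definition of a clique, $B'$ is a facet of $\Delta$, so $B'\notin\Gamma$, whereas every proper subset of $B'$ is independent and hence lies in $\Gamma$. Set $u:=\mathbf{x}_{B'}t^{2}$. For any $v\in B'$ the factorization $u=(\mathbf{x}_{B'\setminus\{v\}}t)(x_v t)$ certifies that $u\in R_\Gamma$.

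I would then verify the two required properties. The monomials of $t$-degree one in $R_\Gamma$ are exactly the generators $\mathbf{x}_F t$ with $F\in\Gamma$; since $B'\notin\Gamma$, the quotient $u/t=\mathbf{x}_{B'}t$ does not belong to $R_\Gamma$, so $u\notin(t)$. For the membership $u^{\,d-1}\in(t)$, the decisive identity is
\[
\mathbf{x}_{B'}^{\,d-1}\;=\;\prod_{v\in B'}\mathbf{x}_{B'\setminus\{v\}},
\]
which holds because each $i\in B'$ is missing from precisely one of the $(d-1)$-subsets $B'\setminus\{v\}$. Consequently $\mathbf{x}_{B'}^{\,d-1}t^{\,d}=\prod_{v\in B'}(\mathbf{x}_{B'\setminus\{v\}}t)\in R_\Gamma$, and multiplying by $t^{\,d-3}$ (which equals $1$ when $d=3$ and otherwise lies in $R_\Gamma$ because $t=\mathbf{x}_\emptyset t\in R_\Gamma$) yields $u^{\,d-1}/t=\mathbf{x}_{B'}^{\,d-1}t^{\,2d-3}\in R_\Gamma$, so $u^{\,d-1}\in(t)$.

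The only real obstacle is conceptual, namely spotting the correct $u$: once one sees that $\mathbf{x}_{B'}t^{2}$ is forced into $R_\Gamma$ (even though $\mathbf{x}_{B'}t$ is not) by splitting $B'$ as a $(d-1)$-face plus a singleton, and that raising to the $(d-1)$-st power lets one use the $d$ distinct $(d-1)$-subsets of $B'$ to absorb an extra factor of $t$, the verification is a direct monomial calculation. The minor bookkeeping split $d=3$ versus $d\ge 4$ in the harmless factor $t^{d-3}$ is the only case analysis required.
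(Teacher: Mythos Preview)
Your proof is correct and takes a genuinely different route from the paper's. The paper argues abstractly via the divisor class group: if $(t)$ were radical with $\dim(\Delta)\ge 2$, then $(t)=\bigcap_{i}\p_i$ would force $\sum_i[\p_i]=0$ in $\textup{Cl}(R_\Gamma)$, and by \cite[Lemma~4.1(a)]{FHS} this would require all the $x_{n+1}$-coefficients of the associated support forms to be equal; but Corollary~\ref{Cor:PGamma} supplies primes $P_i$ with coefficient~$1$ and primes $L_j$ with coefficient $d-1>1$, a contradiction. Your argument, by contrast, is entirely elementary: you produce an explicit monomial $u=\mathbf{x}_{B'}t^{2}$ that is in $\sqrt{(t)}\setminus(t)$ by a direct factorization, avoiding Theorem~\ref{Thm:Lj}, Corollary~\ref{Cor:PGamma}, and the divisor-class-group machinery altogether.

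Each approach has its merits. Yours is shorter, self-contained, and makes no appeal to normality, support forms, or external references; it would survive in any setting where one simply knows the generators of $R_\Gamma$. The paper's argument, while heavier, is a natural dividend of the structural work already done in Section~\ref{sec:3}: once the primes $P_i$ and $L_j$ and their support forms are in hand, the radical question falls out of the class-group relation essentially for free, and the method would detect non-radicality in other toric situations where an explicit witness is harder to guess.
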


\begin{proof}
    If $\dim(\Delta)=1$, then $(t)\subset R_\Gamma$ is radical by Proposition \ref{Lem:t-radical}.

    Conversely, assume that $(t)$ is a radical ideal, and suppose by contradiction that $\dim(\Delta)\ge2$. By our assumption we have $(t)=\p_1\cap\dots\cap\p_r$, where $\mathcal{P}_\Gamma=\{\p_1,\dots,\p_r\}$ is the set of the height one monomial prime ideals containing $t$. It follows that in $\textup{Cl}(R_\Gamma)$ we have
    \begin{equation}\label{eq:class(t)}
        \sum_{i=1}^{r}[\p_i]=[(t)]=0.
    \end{equation}
    Let $f_i(x)=\sum_{j=1}^{n+1}c_{i,j}x_j$ be the support form associated with $\p_i$ for $i=1,\dots,r$. It follows from \cite[Lemma 4.1(a)]{FHS} that the divisor class group $\textup{Cl}(R_\Gamma)$ is generated by the classes $[\p_1],\dots,[\p_r]$ with the unique relation $\sum_{i=1}^rc_{i,n+1}[\p_i]=0$. From this fact, it follows that equation (\ref{eq:class(t)}) holds if and only if $c_{1,n+1}=\dots=c_{r,n+1}$. However this is not possible by Corollary \ref{Cor:PGamma}. Indeed, the support form associated to any $P_i\in\mathcal{P}_\Gamma$ has the coefficient of $x_{n+1}$ equal to $1$, whereas the support form associated to any $L_j\in\mathcal{P}_\Gamma$ has the coefficient of $x_{n+1}$ equal to $\dim(\Delta)>1$.
    \end{proof}\bigskip

\section{Cohen-Macaulay sortable simplicial complexes}

In this section we give a characterization of the Cohen-Macaulay property of $d$-flag sortable simplicial complexes.
To do so, we consider more generally the independence complex of interval simplicial complexes, which are non-pure in general.\smallskip  

Let $\Delta$ be a simplicial complex on vertex set $[n]$. We say that $\Delta$ is an \textit{interval simplicial complex} if for any facet $F=\{i_1<\dots<i_{d}\}\in\mathcal{F}(\Delta)$, the interval $[i_1,i_{d}]$ is a clique of $\Delta$. Notice that $\Delta$ can be written as 
\begin{equation}\label{intervalComplex}
\Delta=\Delta_1^{[r_1]}\cup \Delta_2^{[r_2]}\cup \cdots\cup \Delta_m^{[r_m]}, 
\end{equation}
where $r_1,\ldots,r_n$ are positive integers, each $\Delta_j$ is a simplex and $\Delta_1^{[1]}\cup \Delta_2^{[1]}\cup \cdots\cup \Delta_m^{[1]}$ is a proper interval graph  with the intervals $B_j=V(\Delta_j)$, $j=1,\ldots,m$.\smallskip

It is clear that a unit-interval simplicial complexes is just an interval simplicial complex which is pure.

\begin{Theorem}\label{Thm:vd}
Let  $\Delta$ be an interval simplicial complex.
Then $\Ind(\Delta)$ is vertex decomposable.
\end{Theorem}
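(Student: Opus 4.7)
I would proceed by induction on $n = |V(\Delta)|$. The base case $n \leq 1$ is immediate since $\Ind(\Delta)$ is then a simplex or the void complex.

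For the inductive step, write $\Delta = \Delta_1^{[r_1]} \cup \cdots \cup \Delta_m^{[r_m]}$ with intervals $B_j = V(\Delta_j)$ labeled so that $\min B_1 < \min B_2 < \cdots < \min B_m$, which by the proper interval structure also forces $\max B_1 < \cdots < \max B_m$. I propose to take $v = \max B_1$ as the shedding vertex. The crucial structural point is: whenever $u \in B_1$ with $u < v$, every interval $B_j$ containing $u$ must also contain $v$, since $\min B_j \leq u < v \leq \max B_j$ (the last inequality using $\max B_j \geq \max B_1 = v$ for every $j$).

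To verify that $v$ is a shedding vertex of $\Ind(\Delta)$, let $F$ be any face of $\textup{link}_{\Ind(\Delta)}(v)$, so $F \cup \{v\} \in \Ind(\Delta)$ with $v \notin F$. Since $v \in B_1$, independence of $F \cup \{v\}$ forces $|F \cap B_1| \leq r_1 - 1$, and so $|B_1 \setminus F| \geq (r_1 + 1) - (r_1 - 1) = 2$; since $v \in B_1 \setminus F$, there is a vertex $u \in B_1 \setminus (F \cup \{v\})$. If some facet $G$ of $\Delta$ were contained in $F \cup \{u\}$, then $u \in G$ (otherwise $G \subseteq F$, violating independence of $F$), and $G \subseteq B_j$ for some $j$ with $u \in B_j$. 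The key observation gives $v \in B_j$, whence $|(F \cup \{v\}) \cap B_j| \geq (|G|-1) + 1 = r_j + 1$, contradicting $F \cup \{v\} \in \Ind(\Delta)$. Hence $F \cup \{u\} \in \Ind(\Delta)$, confirming the shedding property.

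To complete the induction, a direct computation identifies $\textup{del}_{\Ind(\Delta)}(v) = \Ind(\Delta^{\textup{del}})$ and $\textup{link}_{\Ind(\Delta)}(v) = \Ind(\Delta^{\textup{link}})$, where $\Delta^{\textup{del}}$ is obtained from $\Delta$ by replacing each $B_j$ containing $v$ with $B_j \setminus \{v\}$ while keeping $r_j$, and $\Delta^{\textup{link}}$ is built analogously but with $r_j$ replaced by $r_j - 1$ for those $j$ (the vertices of $B_j \setminus \{v\}$ being excluded from the vertex set whenever $r_j = 1$, as they become forbidden singletons). Both are interval simplicial complexes on strictly fewer vertices, so they are vertex decomposable by the induction hypothesis.

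The main obstacle is justifying the choice of $v = \max B_1$: the more natural candidates $\min V(\Delta)$ or $\max V(\Delta)$ fail to be shedding vertices already in examples such as the path $P_4$, and only the rightmost vertex of the leftmost interval interacts properly with the independence constraints via the implication $u < v$, $u \in B_j \Rightarrow v \in B_j$. A secondary technicality is arranging $\Delta^{\textup{link}}$ to remain an interval simplicial complex when some $r_j$ drops to $0$, which is resolved by restricting the vertex set.
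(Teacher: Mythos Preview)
Your proof is correct and rests on the same combinatorial core as the paper's: the shedding vertex is the largest element of the leftmost ``active'' interval, and the shedding condition is verified via the swap $u \leftrightarrow v$, using that every interval $B_j$ containing $u$ also contains $v$. The paper phrases the shedding condition dually (every facet of $\Del_\Gamma(v)$ is a facet of $\Gamma$, leading to a Case~1/Case~2 split), whereas your link formulation absorbs Case~1 automatically since $F\cup\{v\}$ being independent forces $|F\cap B_1|\le r_1-1$, guaranteeing the existence of $u$.

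Where your argument genuinely diverges is in the induction scheme. The paper runs a double induction on $\sum r_j$ and $\sum |B_j|$, with base case $r_j=1$ for all $j$ (a proper interval graph), which is dispatched by invoking Woodroofe's theorem that independence complexes of chordal graphs are vertex decomposable. Your single induction on $n=|V(\Delta)|$ with trivial base case $n\le 1$ makes the proof entirely self-contained: in particular it yields, as a special case, an independent proof of Woodroofe's result for proper interval graphs. This is a real simplification.

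Two small points deserve a sentence each in a polished write-up. First, your inequality $|B_1\setminus F|\ge (r_1+1)-(r_1-1)$ uses $|B_1|\ge r_1+1$; this holds provided you take the canonical decomposition in which every $\Delta_j^{[r_j]}$ actually contributes facets (equivalently, drop any term with $|B_j|\le r_j$, which contributes nothing to $\Delta$). Second, the swap step needs that the $(r_j+1)$-subset $(G\setminus\{u\})\cup\{v\}$ of $B_j$ is again a \emph{facet} of $\Delta$; this is exactly the content of the interval-complex hypothesis and holds in the canonical decomposition, but it is worth saying explicitly, since for an arbitrary expression $\Delta=\bigcup_j\Delta_j^{[r_j]}$ an $(r_j+1)$-subset of $B_j$ need not be maximal in $\Delta$.
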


\begin{proof}
Let $\Delta$ be as described in (\ref{intervalComplex}), and set $\Gamma=\Ind(\Delta)$. We may assume that $V(\Delta)=[n]$, each $B_j\subset [n]$ is an interval and that $\min B_i<\min B_j$ for any $i<j$. 
We prove the assertion by double induction on $\sum_{j=1}^m r_j\geq m$ and $\sum_{j=1}^m |B_j|$.
If $\sum_{j=1}^m r_j=m$, then $r_j=1$ for all $j$. Hence, by assumption $\Delta$ is a proper interval graph, and so it is chordal. Then by \cite[Corollary 7]{W}, $\Ind(\Delta)$ is vertex decomposable.

Now, let $\sum_{j=1}^m r_j>m$ and assume inductively that for any interval simplicial complex $\Delta'=\Lambda_1^{[r'_1]}\cup\Lambda_2^{[r'_2]}\cup\cdots\cup\Lambda_m^{[r'_m]}$ with $\sum_{j=1}^m r'_j<\sum_{j=1}^m r_j$ or $\sum_{j=1}^m|V(K_j)|<\sum_{j=1}^m|V(B_j)|$, the simplicial complex
$\Ind(\Delta')$ is vertex decomposable. 

We have $\sum_{j=1}^m|B_j|\geq m$. If $\sum_{j=1}^m|B_j|=m$, then $|B_j|=1$ for all $j$. Since $r_j\geq 1$ for all $j$, it follows that $\Delta=\emptyset$ and $\Gamma=\langle [n]\rangle$ is a simplex. Hence, it is vertex decomposable.  
Similarly, if $|B_j|\leq r_j$ for all $j$, then $\Delta=\emptyset$ and $\Gamma=\langle [n]\rangle$ is a simplex, and so it is vertex decomposable. So we may assume that there exists an integer $1\leq j\leq m$ such that $|B_j|>r_j$. Let $p$ be the smallest such integer, and let $i=\max\{\ell: \ \ell\in B_p\}$. 

First, we show that $\Del_{\Gamma}(i)$ and $\Lk_{\Gamma}(i)$ are vertex decomposable.
Indeed, it is straightforward to see that $$\Del_{\Gamma}(i)=\Ind(\bigcup_{i\notin B_j}\Delta_j^{[r_j]})\cup \bigcup_{i\in B_j}(\Del_{\Delta_j}(i)^{[r_j]})$$ and $$\Lk_{\Gamma}(i)=\Ind(\bigcup_{i\notin B_j}\Delta_j^{[r_j]})\cup \bigcup_{i\in B_j}(\Del_{\Delta_j}(i)^{[r_j-1]}).$$ 
Notice that $\Del_{\Gamma}(i)$ and $\Lk_{\Gamma}(i)$ are interval simplicial complexes, as well.
Thus by our induction hypothesis $\Del_{\Gamma}(i)$ and $\Lk_{\Gamma}(i)$ are vertex decomposable. It remains to show that any facet of $\Del_{\Gamma}(i)$ is a facet of $\Gamma$. Let $F$ be a facet of $\Del_{\Gamma}(i)$. Two cases may happen.\medskip

\textsc{Case 1}. $B_p\setminus \{i\}\subseteq F$. Then $B_p\subseteq F\cup \{i\}$. Since $|B_p|>r_p$, it follows that $F\cup \{i\}$ contains a facet $H$ of $\Delta_p^{[r_p]}$, which is a facet of $\Delta$. Hence,  $F\cup \{i\}\notin \Gamma$. This shows that $F$ is a facet of $\Gamma$.\medskip

\textsc{Case 2}. $B_p\setminus \{i\}\nsubseteq F$. Then there exists $t\in B_p\setminus F$ with $t<i$. Since $F$ is a facet of $\Del_{\Gamma}(i)$ and $t\neq i$, we have $F\cup\{t\}\notin \Gamma$. So there exists a facet $H$ of $\Delta$ such that $H\subseteq F\cup\{t\}$. Then $H$ is a facet of $\Delta_q^{[r_q]}$ for some $q$ and $t\in H$. Since $|B_j|\leq r_j$ for $j=1,\ldots,p-1$, we have $\Delta_j^{[r_j]}=\emptyset$ for $j=1,\ldots,p-1$. Hence $q\geq p$. From $t,i\in B_p$, $t\in B_q$ and the inequalities $t<i$ and $p\leq q$, we obtain $i\in B_q$. Thus $H'=(H\setminus \{t\})\cup\{i\}$ is a facet of $\Delta_q^{[r_q]}$ and hence a facet of $\Delta$. Moreover, $H'\subseteq F\cup\{i\}$.
So $F\cup \{i\}\notin \Gamma$, which means that $F$ is a facet of $\Gamma$.
\end{proof}

Notice that for  $\Gamma=\Ind(\Delta)$, the Stanley-Reisner ideal of $\Gamma$ is the facet ideal $I(\Delta)$ of $\Delta$. Therefore,
applying Theorem \ref{Thm:vd} together with \cite[Theorem 2.3]{MK} and \cite[Theorem 3.1]{M} we have

\begin{Corollary}\label{Cor:vd}
Let  $\Delta$ be an interval simplicial complex.
Then
\begin{enumerate}
\item[\textup{(a)}] $S/I(\Delta)$ is Cohen-Macaulay if and only if $I(\Delta)$ is unmixed. 
\item[\textup{(b)}] $I(\Delta)^{\vee}$ is vertex splittable and hence it has linear quotients.
\item[\textup{(c)}] $\mathcal{R}(I(\Delta)^{\vee})$ is normal Cohen-Macaulay and $I(\Delta)^{\vee}$ satisfies the strong persistence property.
\item [\textup{(d)}] If $I(\Delta)^{\vee}$ is equigenerated, then the toric ring $K[u: u\in \mathcal{G}(I(\Delta)^{\vee})]$ is normal and Cohen-Macaulay. 
\end{enumerate}
\end{Corollary}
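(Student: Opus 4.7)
The plan is to exploit the identification that for $\Gamma=\Ind(\Delta)$ the minimal non-faces of $\Gamma$ are exactly the facets of $\Delta$. Consequently, the Stanley-Reisner ideal $I_\Gamma$ coincides with the facet ideal $I(\Delta)$, and hence the Alexander dual satisfies $I_\Gamma^\vee = I(\Delta)^\vee$. With this identification in place, Theorem~\ref{Thm:vd} says that the Stanley-Reisner complex of the monomial ideal $I(\Delta)$ is vertex decomposable, and each part of the corollary is obtained by quoting a standard consequence of this property.

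For part (a), vertex decomposability of $\Gamma$ implies sequential Cohen-Macaulayness, and a sequentially Cohen-Macaulay complex is Cohen-Macaulay if and only if it is pure; since purity of $\Gamma$ is equivalent to unmixedness of the monomial ideal $I(\Delta)$, the claim follows. For part (b), I would appeal to \cite[Theorem~2.3]{MK}, which equates vertex decomposability of a simplicial complex with vertex splittability of the Alexander dual of its Stanley-Reisner ideal; combined with the fact that vertex splittable monomial ideals have linear quotients, this gives (b) at once. For parts (c) and (d), I would apply \cite[Theorem~3.1]{M} to the vertex splittable ideal $I(\Delta)^\vee$: that theorem asserts normality and Cohen-Macaulayness of $\mathcal{R}(I(\Delta)^\vee)$ together with the strong persistence property, yielding (c), and further, in the equigenerated setting, the normality and Cohen-Macaulayness of the special fiber $K[u:u\in\mathcal{G}(I(\Delta)^\vee)]$, giving (d).

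The only conceptually non-routine piece is (d), since in general neither normality nor Cohen-Macaulayness of a Rees algebra automatically descends to the special fiber of an equigenerated ideal. Fortunately, this descent is already built into \cite[Theorem~3.1]{M} for vertex splittable equigenerated ideals. Thus, once Theorem~\ref{Thm:vd} has been established, the entire corollary reduces to a clean composition of the cited structural results via the Stanley-Reisner/Alexander-dual dictionary.
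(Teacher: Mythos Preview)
Your proposal is correct and follows essentially the same route as the paper: the paper deduces the corollary from Theorem~\ref{Thm:vd} via the identification $I_{\Ind(\Delta)}=I(\Delta)$ (stated just before the corollary) together with \cite[Theorem~2.3]{MK} and \cite[Theorem~3.1]{M}, exactly the references you invoke. Your treatment of (a) through the chain ``vertex decomposable $\Rightarrow$ sequentially Cohen--Macaulay $\Rightarrow$ Cohen--Macaulay iff pure iff unmixed'' is a standard unpacking of the same implication the paper leaves implicit.
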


\begin{Corollary}
Any $d$-flag sortable simplicial complex $\Gamma$ is vertex decomposable. In particular, $\Gamma$ is Cohen-Macaulay if and only if it is pure.
\end{Corollary}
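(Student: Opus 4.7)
The plan is to deduce this corollary directly from the results established earlier in the paper, with almost no additional work.

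First I would invoke Theorem~\ref{Thm:Ind}: since $\Gamma$ is a $d$-flag sortable simplicial complex, we have $\Gamma=\Ind(\Delta)$, where $\Delta$ is a unit-interval simplicial complex of dimension $d-1$. By the definitions recalled just before Theorem~\ref{Thm:vd}, every unit-interval simplicial complex is in particular an interval simplicial complex (it is precisely the pure case, corresponding to $r_1=\cdots=r_m=d-1$ in the decomposition~(\ref{intervalComplex})). Consequently, Theorem~\ref{Thm:vd} applies and yields that $\Gamma=\Ind(\Delta)$ is vertex decomposable, which proves the first assertion.

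For the second statement, I would use the standard implications vertex decomposable $\Rightarrow$ shellable $\Rightarrow$ Cohen-Macaulay, where the last implication requires purity. More precisely, if $\Gamma$ is pure, then the vertex decomposability established above gives pure shellability, hence the Cohen-Macaulay property of $\Gamma$ (equivalently, of the Stanley-Reisner ring $S/I_\Gamma$). The converse direction is immediate from the fact that every Cohen-Macaulay simplicial complex is pure.

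Since every step reduces to citing a result already stated in the paper, there is no real obstacle here; the only thing to double-check is that the definition of ``unit-interval simplicial complex'' given in Section~\ref{sec:1} coincides with the pure case of the definition of ``interval simplicial complex'' used in Section~4, which is transparent from the respective definitions. The corollary therefore admits a two-line proof consisting of an application of Theorem~\ref{Thm:Ind} followed by Theorem~\ref{Thm:vd}, together with the classical Cohen-Macaulay/purity dichotomy.
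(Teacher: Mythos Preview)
Your proposal is correct and follows essentially the same route as the paper's own proof: apply Theorem~\ref{Thm:Ind} to write $\Gamma=\Ind(\Delta)$ with $\Delta$ unit-interval, then invoke Theorem~\ref{Thm:vd}. The paper's proof is even terser, leaving the Cohen--Macaulay/purity equivalence implicit, but your added justification of that step is standard and accurate.
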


\begin{proof}
By Theorem~\ref{Thm:Ind}, we have $\Gamma=\Ind(\Delta)$, where $\Delta$ is a unit-interval simplicial complex. Thus by Theorem~\ref{Thm:vd}, $\Gamma$ is vertex decomposable.
\end{proof}

\noindent\textbf{Acknowledgment.}
A. Ficarra was partly supported by INDAM (Istituto Nazionale di Alta Matematica), and also by the Grant JDC2023-051705-I funded by
MICIU/AEI/10.13039/501100011033 and by the FSE+. S. Moradi is supported by the Alexander von Humboldt Foundation.

\end{document}